\theoremstyle{theorem}
\newtheorem{lemma}{Lemma}[section]
\newtheorem{theorem}{Theorem}[section]
\newtheorem*{corollary}{Corollary}
\newtheorem*{facta}{Theorem A}
\newtheorem*{fact}{Theorem B}
\newtheorem{proposition}{Proposition}[section]
\numberwithin{equation}{section}
\theoremstyle{remark}
\newtheorem*{example}{Example}
\newtheorem*{remark}{Remark}
\newcommand{\C}{{\mathbf C}}
\newcommand{\T}{{\mathbf T}}
\newcommand{\D}{{\mathbf D}}
\newcommand{\R}{{\mathbf R}}
\newcommand{\vf}{{\varphi}}
\newcommand{\Harm}{{\rm Harm}}
\newcommand{\const}{{\rm const}}
\newcommand{\supp}{{\rm supp}}
\newcommand{\E}{{\rm E}}
\newcommand{\cE}{{\mathcal E}}
\newcommand{\HH}{{\mathcal K}}
\newcommand{\Rr}{{\mathcal R}}
\newcommand{\cO}{{\mathcal O}}
\newcommand{\F}{{\mathcal F}}
\newcommand{\Df}{{\mathcal H}}
\newcommand{\beq}{\begin{equation}}
\newcommand{\eeq}{\end{equation}}
\newcommand{\beqs}{\begin{equation*}}
\newcommand{\eeqs}{\end{equation*}}
\newcommand{\beg}{\begin{gather}}
\newcommand{\eeg}{\end{gather}}
\newcommand{\Dvaan}{{2^{2^n}}}
\newcommand{\Dvaak}{{2^{2^k}}}
\newcommand{\Dvak}{2^k}
\newcommand{\Dvakk}{2^{k+1}}
\newcommand{\ve}{{\varepsilon}}
\begin{document}

\title
[Radial oscillation ]
{Radial oscillation of harmonic functions in the Korenblum class }
\author{Yurii Lyubarskii}
\address{Department of Mathematical Sciences, Norwegian University of Science and Technology, NO-7491, Trondheim, Norway}
\email{yura@math.ntnu.no} 
 \author {Eugenia Malinnikova}
 \address{Department of Mathematical Sciences, Norwegian University of Science and Technology, NO-7491, Trondheim, Norway}
\email{eugenia@math.ntnu.no}

\begin{abstract}
We study radial behavior of harmonic functions in the unit disk belonging to the Korenblum class. We prove that functions
which admit two-sided Korenblum estimate   either oscillate or have slow growth along almost all radii.
\end{abstract}

\thanks{ The authors  were partly supported by the Research
Council of Norway grant 185359/V30 and    by  ESF grant HCAA}

\subjclass[2000]{ Primary 31A20; Secondary 60G46, 30J99.}
\keywords{Spaces of analytic functions in the disk, Korenblum class, harmonic functions, boundary values, martingales,
law of iterated logarithm}

\maketitle

\section{Introduction  and main results}

We study   radial behavior of functions harmonic 
in the unit disk $\D$.  Let
$\Harm (\D)$ stay  for the set of all real-valued functions  harmonic 
in   $\D$ and $|E|$  for the  normalized linear Lebesgue measure of a set $E$
 on the unit circle $\T$.  It follows from the classical result of Lusin and Privalov
 see e.g. \cite{P}, that there exist  functions $u\in  \Harm(\D)$ which tend to infinity
 along almost all radii, while non-tangential growth may occur 
 only on subsets of  the  unit circle having zero linear measure.

 An important generalization of this result is due to Kahane and Katznelson 
 \cite{KK}. They proved that for any function $v(r)$, $0<r<1$  such that 
 $v(r)\nearrow \infty$ as $r\nearrow 1$ there exists a function 
$u\in  \Harm(\D)$ such that $u(re^{i\theta})\to \infty$  as  $r\nearrow 1$ and also 
\beq
\label{eq:majorant}
|u(z)|<v(|z|),  \ z\in \D. 
\eeq

It is known (see  \cite{BLMT,EM}),  that for a wide class of 
 majorants $v$ realtion  \eqref{eq:majorant} yields 
 \[
\liminf_{r\nearrow 1}  \frac 
{u(re^{i\phi})}{v(r)}   \leq  0, \  \mbox{and}  \   
       \limsup_{r\nearrow 1}  \frac 
{u(re^{i\phi})}{v(r)}   \geq  0
\]
for almost all values of $\phi\in (-\pi,\pi)$. In other words, for almost all $\phi$ the values 
$u(re^{i\phi})$  "oscillate"  between $\pm v(r)$. In this article we study this oscillation
in more details.

We restrict ourselves for definiteness  to functions which belong  to the classical (harmonic)
 Korenblum class $\HH$ consisting of all real functions $u\in \Harm(\D)$
 satisfying
 \beq
\label{eq:K}
u(z)\le \log \frac{e}{1-|z|}, \ z\in \D.
\eeq
  This class  was introduced and studied in \cite{K}. We refer the reader to 
 \cite{HKZ, S, MT, BL} for further properties of functions  in $\HH$, including their behavior near the boundary.

 One of the typical examples here is the  function
\beq
\label{eq:super}
u_A(z)=\Re \sum_n A^n z^{2^{A^n}},
\eeq
where $A\ge 2$ is an integer.
It is easy to see that $|u_A(z)|\ge c|\log(1-|z|)|$ on a large portion of 
the unit disk, and along almost each radius this function oscillates between $c|\log(1-|z|)|$ and $-c|\log(1-|z|)$. 
We give a quantitative description of this oscillation and prove that 
such oscillation occurs  for \textsl{every} harmonic function $u$ such that $u, -u\in \HH$.

 More precisely for a function $u\in \HH$,  we   consider the weighted average
\beq
\label{eq:01}
I_u(R,\vf) = \int_{1/2}^R \frac{u(re^{i\vf})}
               {(1-r)\left (\log \frac{1}{1-r}\right )^2}dr, \quad R\in (0,1), \  \vf\in(-\pi,\pi).
\eeq
 A straightforward calculation shows that \eqref{eq:K} gives
\[
I_u(R,\vf) \lesssim C\log \log \frac{1}{1-R}.
\]
Here and in the sequel we use notation $ a\lesssim b$ for the statement: 
 there exists an absolute constant $c$ such that $a\le cb$; 
 we also write $a\simeq b$ if $a\lesssim b$ and $b\lesssim a$.

 If both $u\in \HH$  and $-u\in \HH$, the behavior of $I_u(R,\vf)$
 is controlled by the law  of the iterated logarithm:
 
  \begin{theorem}
\label{th:main}
There exists $K$ such that
if $u$ is a harmonic function in $\D$  satisfying
\beq
\label{eq:below}
|u(z)|\le \log\frac{e}{1-|z|},
\eeq
then
\beq
\label{eq:main}
\limsup_{R\nearrow 1}I_u(R,\phi)\left(\log\log\frac1{1-R}\log_4\frac{1}{1-R}\right)^{-1/2}\le K
\eeq
for almost every $\phi\in(-\pi,\pi]$, (here $\log_4 x=\log\log\log\log x$) and also
\beq
\label{eq:mean}
\int_{-\pi}^\pi I_u(R,\phi)^2d\phi\le K\log\log\frac1{1-R}.
\eeq
\end{theorem}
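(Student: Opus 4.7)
The plan is to combine a Fourier expansion of $u$ on concentric circles with a law of the iterated logarithm for martingales. Writing $u(re^{i\phi}) = \sum_m \hat u_m r^{|m|} e^{im\phi}$ and substituting into \eqref{eq:01} yields
\[
I_u(R, \phi) = \sum_m \hat u_m J_m(R) e^{im\phi}, \qquad J_m(R) := \int_{1/2}^R \frac{r^{|m|}\,dr}{(1-r)\log^2\frac{1}{1-r}}.
\]
The change of variable $s = \log(1/(1-r))$ rewrites $J_m(R)$ as $\int (1-e^{-s})^{|m|}/s^2\,ds$, from which it follows that $J_m(R) \simeq 1/\log(2+|m|) - 1/\log(1/(1-R))$ when $|m|(1-R)\lesssim 1$ and $J_m(R)$ is negligible for larger $|m|$. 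Evaluating the Korenblum bound at $r = 1-2^{-N}$ and applying Parseval gives the crucial \emph{cumulative} coefficient estimate
\[
\sum_{|m|\le 2^N}|\hat u_m|^2 \le CN^2,
\]
which is substantially stronger than the sum of the individual block bounds $\sum_{m\in B_k}|\hat u_m|^2 \le Ck^2$ on $B_k := \{m : 2^{k-1}\le |m| < 2^k\}$.

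For the $L^2$ bound \eqref{eq:mean}, take $R = r_N := 1-2^{-N}$ and apply Parseval once more to write $\|I_u(r_N, \cdot)\|_{L^2(\T)}^2 = 2\pi\sum_m|\hat u_m|^2 J_m(r_N)^2 \simeq \sum_k c_k J_{2^k}(r_N)^2$, where $c_k := \sum_{m\in B_k}|\hat u_m|^2$. The estimates on $J_m$ reduce the problem to bounding $\sum_{k\le N}c_k/k^2$; the crude block bound $c_k\le Ck^2$ alone would give the useless $O(N)$, but Abel summation combined with the cumulative estimate $\sum_{k\le K}c_k\le CK^2$ yields $\sum_{k\le N}c_k/k^2 \le C\log N$, hence $\|I_u(r_N, \cdot)\|_{L^2(\T)}^2 \le C\log N \simeq C\log\log(1/(1-R))$.

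For the pointwise estimate \eqref{eq:main}, approximate $X_N(\phi) := I_u(r_N, \phi)$ by a martingale $M_N$ on $\T$ with respect to the dyadic arc filtration $\F_N$ (arcs of length $2\pi\cdot 2^{-N}$). The increment $Y_N := X_N - X_{N-1}$ satisfies the pointwise bound $|Y_N(\phi)|\le C/N$, obtained from $|u|\le CN$ on $[r_{N-1}, r_N]$ and $\int_{r_{N-1}}^{r_N}dr/((1-r)\log^2(1/(1-r))) = O(1/N^2)$. Matching Fourier block $B_k$ with the filtration $\F_k$ (so that the martingale difference at level $k$ captures the new Fourier content near frequency $2^k$) should yield a martingale $M_N$ differing from $X_N$ by a bounded quantity, with bounded differences $|M_N - M_{N-1}|\le C/N$ and predictable quadratic variation of order $\log N$. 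Applying the Stout--Heyde martingale law of iterated logarithm then gives $\limsup M_N(\phi)/\sqrt{\log N \cdot \log\log\log N}\le K$ for a.e.\ $\phi$, which transfers to $X_N$; the passage from $R = r_N$ to arbitrary $R\in(0, 1)$ uses $|Y_N|\le C/N$ once more. Since $\log\log(1/(1-r_N))\simeq\log N$ and $\log_4(1/(1-r_N))\simeq\log\log\log N$, this is exactly \eqref{eq:main}.

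The principal technical obstacle lies in the martingale construction. The naive Doob splitting $Y_N = (Y_N - E[Y_N\mid\F_{N-1}]) + E[Y_N\mid\F_{N-1}]$ attributes essentially all of $X_N$ to the predictable summand, since $Y_N$ has dominant low-frequency Fourier content at every scale; the martingale part then has bounded quadratic variation and misses the $\log N$ growth. One must instead construct a martingale carefully adapted to the Fourier block decomposition $u = \sum_k u_k$, using sharp off-diagonal control on the correlations between scales (supplied by the estimates on $J_m$), so that the predictable quadratic variation correctly reflects the $\log N$ growth demanded by \eqref{eq:mean}.
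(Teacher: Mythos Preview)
Your $L^2$ argument for \eqref{eq:mean} is correct and is a genuine alternative to the paper's: the cumulative bound $\sum_{|m|\le 2^N}|\hat u_m|^2\lesssim N^2$ (from Parseval at $r=1-2^{-N}$) together with $J_m(r_N)\lesssim 1/k$ for $m\in B_k$ and Abel summation does give $\|I_u(r_N,\cdot)\|_2^2\lesssim\log N\simeq\log\log\frac1{1-R}$. The paper instead derives \eqref{eq:mean} from orthogonality of the martingale increments it constructs for the pointwise part.

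For \eqref{eq:main}, though, there is a real gap. You correctly diagnose the obstacle---the naive Doob decomposition of $X_N$ against the dyadic filtration dumps essentially everything into the predictable part---but you do not actually build the martingale. There is also an internal inconsistency in the sketch: a pointwise bound $|M_N-M_{N-1}|\le C/N$ forces $s_N^2\le\sum_k C^2/k^2<\infty$, contradicting the claimed quadratic variation of order $\log N$; with bounded $s_N$ the LIL yields only a bounded $\limsup$, which is far too strong to be true. A Fourier-block approach faces the further difficulty that the blocks $\sum_{m\in B_k}\hat u_m e^{im\phi}$ are not spatially localized and have no usable $L^\infty$ bound beyond $Ck$, so there is no evident source of bounded martingale differences.

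The paper fills this gap by a different route that bypasses Fourier coefficients entirely. It represents $u$ via the Korenblum premeasure $\mu$ (so $|\mu(I)|\lesssim|I|\log(e/|I|)$), passes to \emph{super-dyadic} radii $r_j=1-2^{-2^j}$, and replaces the increments $v_j$ by $w_j=2v_{j+1}-v_j$, an Abel-type combination that forces the convolution kernels $B_j$ to have integral zero. After truncating $B_j$ to compactly supported $\tilde B_j$, one obtains atoms $\lambda_I=\int_I\tilde B_j(\cdot-\theta)\,d\mu(\theta)$ supported on $3I$ with $\int\lambda_I=0$ and $|\lambda_I|\lesssim 1$. A partition lemma of Chang--Wilson--Wolff/Ba\~nuelos--Moore type then packs these atoms into finitely many martingales $f_n^{(s)}$ on shifted super-dyadic grids, each satisfying $|f_n^{(s)}-f_{n-1}^{(s)}|\lesssim 1$ and hence $s_n^{(s)}\lesssim\sqrt n$, with $n\simeq\log\log\frac1{1-R}$; Stout's LIL applied to each $f^{(s)}$ then gives \eqref{eq:main}. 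The two ideas you are missing are (i) the mean-zero kernel/atom construction, which supplies the cancellation your Doob splitting lacks, and (ii) the super-dyadic coarsening, which makes the increments uniformly $O(1)$ while keeping the number of steps $n\simeq\log\log\frac1{1-R}$, so that $s_n^2\lesssim n$ and the LIL lands exactly on $\sqrt{n\log\log n}$.
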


This result is sharp in the following sense: there exists $\alpha>0$ such that 
the function $u=u_2$  defined by
\eqref{eq:super} with $A=2$  satisfies
 \beq
\label{eq:mainex}
\limsup_{R\nearrow 1}I_u(R,\phi)\left(\log\log\frac1{1-R}\log_4\frac{1}{(1-R)}\right)^{-1/2} >\alpha
\eeq
for almost every $\phi\in(-\pi,\pi]$.
 
Theorem \ref{th:main} allows  one to obtain  more  precise estimates of the radial growth  
for the case when both $u$ and $-u$ belong to $\HH$.
\begin{proposition}
\label{pr:1}
Let $u\in\HH$ and $-u\in\HH$ then for any $a<1/2$
\beq
\liminf_{r\rightarrow 1}\frac{u(re^{i\phi})(\log|\log(1-r)|)^{a}}{|\log(1-r)|}\le 0,	
\eeq
for almost every $\phi\in(-\pi,\pi]$.
\end{proposition}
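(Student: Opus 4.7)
The plan is to argue by contradiction, using only the pointwise a.e.\ upper bound \eqref{eq:main} of Theorem \ref{th:main}; this theorem applies since $u,-u\in\HH$ is exactly the two-sided estimate \eqref{eq:below}. Suppose, on the contrary, that for some $a<1/2$ the set
\[
E=\Bigl\{\vf\in(-\pi,\pi]:\liminf_{r\to 1}\frac{u(re^{i\vf})(\log|\log(1-r)|)^{a}}{|\log(1-r)|}>0\Bigr\}
\]
has positive measure. For each $\vf\in E$ there are $\ve(\vf)>0$ and $r_0(\vf)<1$ with
\[
u(re^{i\vf})\ge \ve(\vf)\,\frac{\log\frac{1}{1-r}}{\bigl(\log\log\frac{1}{1-r}\bigr)^{a}},\qquad r\in[r_0(\vf),1).
\]

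Inserting this lower bound into the definition \eqref{eq:01} of $I_u(R,\vf)$ and performing the substitution $s=\log\frac{1}{1-r}$, $ds=dr/(1-r)$, one finds for $R$ close to $1$
\[
I_u(R,\vf)\ge \ve(\vf)\int_{s_0}^{\log\frac{1}{1-R}}\frac{ds}{s(\log s)^{a}}+O(1)=\frac{\ve(\vf)}{1-a}\Bigl(\log\log\frac{1}{1-R}\Bigr)^{1-a}+O(1).
\]
Thus for every $\vf\in E$ one has $I_u(R,\vf)\ge c(\vf)\bigl(\log\log\frac{1}{1-R}\bigr)^{1-a}$ for all $R$ sufficiently close to $1$, with $c(\vf)>0$.

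On the other hand, by the $\limsup$ statement \eqref{eq:main} of Theorem \ref{th:main}, there is a set $F\subset(-\pi,\pi]$ of full measure such that for every $\vf\in F$ and every $R$ close enough to $1$,
\[
I_u(R,\vf)\le (K+1)\Bigl(\log\log\frac{1}{1-R}\,\log_4\frac{1}{1-R}\Bigr)^{1/2}.
\]
Since $|E|>0$ and $|F|=2\pi$, we may choose $\vf\in E\cap F$. Comparing the two bounds at this $\vf$ gives, for all $R$ close to $1$,
\[
c(\vf)\Bigl(\log\log\frac{1}{1-R}\Bigr)^{1/2-a}\le (K+1)\Bigl(\log_4\frac{1}{1-R}\Bigr)^{1/2}.
\]
The assumption $a<1/2$ makes the left side tend to infinity while the right side grows only like a power of $\log_4$, a contradiction. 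Hence $|E|=0$, which is the claim.

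The main obstacle here is really just Theorem \ref{th:main}; once its pointwise a.e.\ LIL-type bound is available, the deduction reduces to an elementary change of variables. The hypothesis $a<1/2$ appears in a transparent way: it is exactly the threshold at which the $(\log\log)^{1-a}$ contribution produced by integrating the postulated radial lower bound of $u$ against the weight in \eqref{eq:01} overtakes the $(\log\log\cdot\log_4)^{1/2}$ ceiling supplied by Theorem \ref{th:main}.
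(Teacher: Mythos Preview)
Your argument is correct and is exactly the kind of deduction the paper has in mind: the paper does not spell out a proof of Proposition~\ref{pr:1} but simply notes (at the end of Section~4) that it ``follows readily'' from Theorem~\ref{th:main}, and your contradiction argument via the lower bound $I_u(R,\vf)\gtrsim(\log\log\tfrac{1}{1-R})^{1-a}$ against the a.e.\ bound \eqref{eq:main} is the natural way to make this precise. One could equally well use the mean estimate \eqref{eq:mean} after passing to a subset of $E$ on which $\ve(\vf)$ and $r_0(\vf)$ are uniform, but your route through \eqref{eq:main} is just as clean.
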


The proof of Theorem \ref{th:main} is based on   approximation of $I_u(R,\phi)$ 
by the sum of discrete martingales and then application of the law of the iterated 
logarithm for martingales. 
Our approach is based on  the ideas developed by D.L.~Burkholder and R.F.~Gundy \cite{BG},
 C.Y.~Chang, J.M.~Wilson, and T.H.~Wolf \cite{CWW}, N.~Makarov \cite{M}, 
 J.M.~Anderson, L.D.~Pitt \cite{AP}, and R.~Ba\~{n}uelos, I.~Klemes, and C.N.~Moore \cite{BKM}, 
 see also the references therein. 
  We also refer the reader to the monographs \cite{B, BM}.
  However approximation of  harmonic functions by martingales, which became classical by now,
   does not yield the desired approximation  of the weighted average $I_u(R,\vf)$.
  In particular the law of the iterated logarithm for trigonometric series does not say much 
  about   oscillations of our model function (\ref{eq:super}).
  Instead we consider the martingale approximation of the function  $I_u(R,\phi)$  itself.
  We use 
 "super-dyadic" martingales corresponding to the algebras generated by intervals of length $2^{-2^{n}}$ 
 which  seem to be more appropriate for our purposes.
 One of  possible ways to  think about this construction is to consider a suitable
 martingale transform of the "classical" martingale of the harmonic function $u$ 
 thinned out to the "super-dyadic"  algebras.

These results can be directly applied to lacunary  series.   
 We use a result of J.-P.~Kahane, M.~Weiss, and G.~Weiss, \cite{KWW}
 in order to describe all functions  
    $u$ which are  represented by the series
\beq
\label{eq:gap}
u(z)=\Re\sum_{k=1}^\infty a_{n_k}z^{n_k}, \quad n_{k+1}/n_k>\lambda>1 \ 
\mbox{for each $k$},
\eeq
and belong to $\HH$  in terms of the coefficient sequence  $\{a_{n_k}\}$. 
It follows from this description that if a function $u$ of the form 
\eqref{eq:gap} belongs to $\HH$ then 
  $-u\in\HH$.

\begin{corollary}
Let $u\in\HH$   be represented by a lacunary series (\ref{eq:gap}). Then (\ref{eq:main}) holds almost everywhere,
here $K$ depends only on  $\lambda$ from the gap condition and on $C$ in (\ref{eq:K}).
\end{corollary}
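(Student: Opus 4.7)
The corollary should follow by combining Theorem \ref{th:main} with the symmetric Korenblum estimate for lacunary series announced in the paragraph preceding the statement. The plan consists of two short steps.

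First, I would use the Kahane--Weiss--Weiss description \cite{KWW} to argue that membership in $\HH$ is essentially symmetric for lacunary series: if $u(z)=\Re\sum a_{n_k}z^{n_k}$ with $n_{k+1}/n_k>\lambda>1$ satisfies $u(z)\le C\log\frac{e}{1-|z|}$, then one automatically has $|u(z)|\le C_\lambda\log\frac{e}{1-|z|}$ with $C_\lambda$ depending only on $\lambda$ and $C$. The underlying mechanism is that for a lacunary series both $\sup_{|z|=r}u$ and $\sup_{|z|=r}(-u)$ are, up to constants depending on $\lambda$, comparable to the same coefficient-side quantity (e.g.\ essentially $\sum_{n_k\lesssim 1/(1-r)}|a_{n_k}|$), so one-sided control forces two-sided control. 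Concretely, one may split the series into super-dyadic blocks adapted to the gap $\lambda$, each of which contains a bounded number of terms and for which $\sup u$ and $\sup(-u)$ are comparable, and then sum the blockwise bounds.

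Second, with the two-sided estimate in hand, set $\tilde u := u/C_\lambda$. Then $\tilde u$ satisfies the hypothesis \eqref{eq:below} of Theorem \ref{th:main}, so the theorem yields
\[
\limsup_{R\nearrow 1}I_{\tilde u}(R,\phi)\left(\log\log\tfrac1{1-R}\,\log_4\tfrac{1}{1-R}\right)^{-1/2}\le K
\]
for almost every $\phi\in(-\pi,\pi]$. Since the weighted average $I_u(R,\phi)$ is linear in $u$, we have $I_u=C_\lambda I_{\tilde u}$, and the displayed inequality translates directly into \eqref{eq:main} for $u$ with the constant $C_\lambda K$, which depends only on $\lambda$ and on the constant $C$ in \eqref{eq:K}. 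The same normalization gives the mean bound \eqref{eq:mean} as well.

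The main (and essentially only) obstacle is to extract the correct quantitative form of the Kahane--Weiss--Weiss comparison, that is, to verify that the one-sided Korenblum bound for a lacunary series forces the two-sided bound with a controlled constant. Once this symmetric estimate is in place, the corollary is a direct consequence of Theorem \ref{th:main} via the rescaling above.
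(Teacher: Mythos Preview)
Your proposal is correct and follows essentially the same route as the paper: the paper proves in Section~5.2 (using \cite{KWW}) that for a lacunary series the one-sided Korenblum bound, the two-sided bound, and the coefficient condition $\sum_{n_k\le N}|c_{n_k}|\lesssim \log N$ are equivalent with constants depending only on $\lambda$ and $C$, and then simply applies Theorem~\ref{th:main}. Your outline of the symmetry argument (one-sided control $\Rightarrow$ coefficient bound $\Rightarrow$ two-sided control) matches the paper's $(1)\Rightarrow(3)\Rightarrow(2)$ implication chain, and the rescaling step is exactly how the dependence on $\lambda,C$ enters.
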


The situation changes drastically when we consider  functions $u\in \HH$ that satisfy only one-sided estimate \eqref{eq:K} instead of the two-sided estimate
\eqref{eq:below}. This follows from the statement below

\begin{theorem}
\label{th:contr}
The series
 \beq
\label{eq:01cnt}
u(z)= \Re \sum_{n>1} 2^n \frac{z^{2^{2^n}}}{z^{2^{2^n}} -1}
\eeq
converges uniformly on compact sets in $\D$ to a function in $\HH$,
and, for almost every $\phi\in (-\pi,\pi]$,
\beq
\label{eq:04}
\liminf_{R\nearrow 1} I_u(R,\phi)\left(\log\log\frac1{1-R}\right)^{-1}>0
\eeq
\end{theorem}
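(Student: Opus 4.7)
The harmonicity of the summands and the fact that $u\in\HH$ both reduce to pointwise estimates for $\Re\frac{\zeta}{\zeta-1}$: this is $\le \rho/(1+\rho)\le 1/2$ whenever $|\zeta|=\rho<1$, and $O(|\zeta|)$ for small $|\zeta|$. Writing $N_n=2^{2^n}$ and splitting the series at the cutoff $m=m(r)$ defined by $N_m\simeq 1/(1-r)$, the double-exponential growth of $N_n$ gives $u(z)\lesssim 2^{m(r)}\simeq \log\frac{1}{1-|z|}$; uniform convergence on compacts of $\D$ follows along the way. Note that $-u\notin\HH$: each term is very negative wherever $z^{N_n}$ is close to $1$, which is precisely why this example can evade the oscillation conclusion of Theorem~\ref{th:main}.

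After the change of variable $r_t=1-\dvaat$, a direct computation yields $\frac{dr}{(1-r)\log^2(1/(1-r))}=\frac{dt}{2^t}$, so the claim reduces to
\beqs
\int_0^T u(r_t e^{i\phi})\,2^{-t}\,dt\gtrsim T,\qquad T\simeq \log\log\frac{1}{1-R},
\eeqs
for almost every $\phi$. Decompose $u(r_t e^{i\phi})=A_t(\phi)+B_t(\phi)+C_t(\phi)$ according to $n<t$, $n=t$, $n>t$ in the defining series. The tail $C_t$ is trivial: $r_t^{N_{t+1}}\le e^{-N_{t+1}/N_t}=e^{-2^{2^t}}$, so $|C_t|=o(1)$ uniformly.

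For $A_t$, the key fact is $1-r_t^{N_n}\simeq 2^{2^n-2^t}$ for $n<t$, so $\zeta_{n,t}:=r_t^{N_n}e^{iN_n\phi}$ lies very close to the unit circle. The identity $\Re\frac{\rho e^{i\alpha}}{\rho e^{i\alpha}-1}=(\rho^2-\rho\cos\alpha)/(1-2\rho\cos\alpha+\rho^2)$ implies this real part is $\ge 1/2-\eta$ unless $|N_n\phi\bmod 2\pi|\le c_\eta\sqrt{1-r_t^{2N_n}}$. The ``bad'' $\phi$-set for level $n$ at scale $t$ has Lebesgue measure $\lesssim 2^{2^{n-1}-2^{t-1}}$; summing over $n\le t-1$ and then over $t$ gives a convergent series, so by Borel--Cantelli, for a.e. $\phi$ one has $A_t(\phi)\ge (\tfrac12-\eta)(2^t-1)$ for all sufficiently large $t$.

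The delicate piece is $B_t=2^t Y_t$, where $Y_t(\phi)=\Re\frac{\rho_t e^{iN_t\phi}}{\rho_t e^{iN_t\phi}-1}$ and $\rho_t=r_t^{N_t}\to e^{-1}$. Each $Y_t$ is uniformly bounded with zero $\phi$-mean, but its magnitude on a \emph{single} scale is comparable to the main term $A_t/2^t$, so $B_t$ can only be eliminated after integrating in $t$. The Fourier expansion $Y_t=-\sum_{k\ge 1}\rho_t^k\cos(kN_t\phi)$, together with the astronomical gap $N_{t+1}/N_t=2^{2^t}$, makes $\{Y_t\}$ nearly orthogonal in $L^2[-\pi,\pi]$; the Menshov--Rademacher SLLN for orthogonal series then gives $\sum_{t=1}^T Y_t(\phi)=o(T)$ a.e., equivalently $\int_0^T B_t\cdot 2^{-t}\,dt=o(T)$ a.e. Putting the three pieces together, $I_u(R,\phi)\ge(\tfrac12-\eta)T(1-o(1))$ for a.e.\ $\phi$, from which $\liminf I_u/\log\log(1/(1-R))\ge c>0$. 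The main obstacle is precisely this final step: the diagonal ``noise'' $B_t$ has the same order of magnitude as the positive drift $A_t$ on every single scale, and the inequality survives only thanks to the near-independence of the phases $N_t\phi\pmod{2\pi}$, which is afforded by the super-fast growth $N_{t+1}/N_t=2^{2^t}$.
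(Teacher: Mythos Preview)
Your treatment of the convergence, of $u\in\HH$, of the tail $C_t$, and of the ``past'' piece $A_t$ is sound and close in spirit to the paper's argument. The gap is in the diagonal piece $B_t$.

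You write $B_t=2^tY_t$ with $\rho_t=r_t^{N_t}\to e^{-1}$, but this rests on the fictitious identification ``$n=t$'' for a \emph{continuous} parameter~$t$. The series defining $u$ has only integer indices, so for $t\in[l,l+1)$ the diagonal term is $B_t=2^l\,\Re a_l(r_te^{i\phi})$ with $\rho_t=r_t^{N_l}$; as $t$ sweeps the interval, $\rho_t$ moves from $\approx e^{-1}$ up to $1^-$, and $Y_t$ is \emph{not} uniformly bounded. The quantity you actually need to control is
\[
i_{l,l}(\phi)=\int_l^{l+1}B_t\,2^{-t}\,dt=\int_{1-N_l^{-1}}^{1-N_l^{-2}}2^l\,\Re a_l(re^{i\phi})\,\frac{dr}{(1-r)\log^2\frac1{1-r}},
\]
and a short computation with the Fourier expansion $\Re a_l(re^{i\phi})=-\sum_{k\ge1}r^{kN_l}\cos kN_l\phi$ shows that its $k$-th coefficient stays $\simeq 1$ for all $k\lesssim N_l^{1/2}$, so $\|i_{l,l}\|_2\gtrsim 2^{2^{l-2}}$. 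The Menshov--Rademacher SLLN therefore does not apply. In fact the conclusion you are aiming for, $\sum_{l\le L}i_{l,l}=o(L)$ a.e., is \emph{false}: for a.e.\ $\phi$ the numbers $i_{l,l}(\phi)$ are eventually bounded below by a positive constant (they do not average out --- their huge negative values live on the null set where $N_l\phi$ hits $2\pi\mathbf Z$ infinitely often).

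The remedy is to use on $B_t$ exactly the Borel--Cantelli mechanism you already used on $A_t$. Put $E_l=\{\phi:\cos N_l\phi>1-l^{-a}\}$ with $a>2$, so $|E_l|\simeq l^{-a/2}$ is summable; for $\phi\notin E_l$ one checks directly (splitting the $r$-integral at the point where $r^{N_l}=1-\tfrac12 l^{-a}$) that $i_{l,l}(\phi)\ge 2\gamma>0$. This is precisely the paper's route: it extracts the positive drift from the diagonal, and only needs the crude sign $i_{l,j}>0$ for $j<l$. Your stronger estimate $A_t\ge(\tfrac12-\eta)2^{\lfloor t\rfloor}$ is correct but not needed once $i_{l,l}>0$ is in hand.
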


\medskip

The article is organized as follows. In the next Section we collect some results 
on premeasures and (discrete) martingales that are used later. Sections 3 and 4 
deal with the martingale approximation of $I_u(R,\phi)$ and contain the proof 
of Theorem \ref{th:main}. An example showing that Theorem \ref{th:main} is sharp 
is given in Section 5, there we also describe harmonic functions in the Korenblum 
class given by lacunary series. Theorem \ref{th:contr} is proved in the last section.  

{\bf Acknowledgment} The main  part  of this work was done when 
the authors have been visited the Mathematical Department of
University of California, Berkeley. It is our pleasure to thank 
the Department and Prof. Donald Sarason  for kind hospitality. 
 

\section{Preliminaries}

\subsection{Representation of functions from $\HH$}

We refer the reader to the classical article
\cite{K} and to the monograph \cite{HKZ}. Let   $\Rr$ be the set of all 
(open, closed, half-closed)
arcs on the unit circle $\T$.  A function
$\mu: \Rr \to \R$  is a {\em premeasure} if it satisfies
\begin{itemize}
\item $\mu(I_1\cup I_2)=\mu(I_1)+\mu(I_2)$ when $I_1\cap I_2=\emptyset$ and $I_1\cup I_2\in\Rr$,
\item   $\mu(\T)=0$
\item $\lim_{n\rightarrow\infty}\mu(I_n)=0$ whenever $I_1\supset I_2\supset...$ and $\cap_nI_n=\emptyset$.
\end{itemize}
We say that $\mu$ is $\kappa$-{\em bounded from above}, if in addition
\beq
\label{eq:muabove}
\mu(I)\lesssim |I|\log\frac e{|I|}
\eeq
for every $I\in\Rr$.

Everywhere below we assume (for simplicity) that  $u(0)=0$.
Let also
\[
P(re^{i\phi})= \frac{1-r^2}{|e^{i\phi}-r|^2}
\]
be the standard Poisson kernel.
It is proved in \cite{K}  that each $u\in \HH$  can be represented
as the Poisson integral:
\beq
\label{eq:05}
u(re^{i\phi})= \int_{-\pi}^\pi P(re^{i(\phi-\theta)}) d\mu(\theta)
\eeq
with respect to some $\kappa$-bounded from above  premeasure $\mu$.
The integral in the right-hand side should be understood as
\beq
\label{eq:06}
\int_{-\pi}^\pi P(re^{i(\phi-\theta)}) d\mu(\theta)=
     \int_{-\pi}^\pi \left (P(re^{i(\phi-\theta)})\right )'_\theta \mu((e^{i\theta},0) ) d\theta,
\eeq
here  $(e^{i\theta},0)$  stays for the arc of $\T$  which connects $0$
and $e^{i\theta}$.

In the case when $u$ satisfies the two-sided estimate \eqref{eq:below} the corresponding premeaasure is
$\kappa$-bounded from above and below:
\beq
\label{eq:mubelow}
|\mu(I)|\lesssim |I|\log\frac e{|I|}.
\eeq

\subsection{Martingales}

In this part we recall the basic notions and facts about  (super-dyadic) martingales
(on the unit circle) which will be used in the sequel. We  follow  mainly
\cite{St}, see also \cite{Stbook}.

Given $n>0$ let $\cE_n$  be the set of all  super-dyadic intervals (on $\T$) of length $2^{-2^n}$
and
$\F_n $ be the  $\sigma$-algebra generated by $\cE_n$.
Respectively we denote $\cE=\cup \cE_n$.
A function $f:\T\to \C$  is measurable with respect to $\F_n$  if it is constant on each $I\in \cE_n$.
Given any   $f:\T\to \C$ we define its expectation with respect to $\E(f|\F_n)$ as
the measurable (with respect to $\F_n$)  function 
\beq
\label{eq:07}
\E(f|\F_n)= \sum_{I\in \cE_n}
\left (\frac 1 {|I|}\int_I f(t)dt  \right ) \textbf{1}_I ,
\eeq
here $\textbf{1}_I$ denotes the characteristic function of $I$.
A sequence of function $\{f_n\}$, $f_n: \T\to \C$ is called a {\em martingale} if
\begin{itemize}
\item
$f_n$ is $\F_n$ measurable
\item
$
E(f_n| \F_{n-1})= f_{n-1}
$
\end{itemize}

The {\em martingale differences} and the {\em square function} 
of a martingale $\{f_n\}$ are defined by 
\beq
\label{eq:08}
d_j=f_j-f_{j-1} \ \mbox{and} \ s_n=\left (\sum_{j=1}^n E(d_j^2|\F_{j-1}) \right )^{1/2},
\eeq
respectively, see e.g. \cite{St}.
Remark, that our martingales are super-dyadic so the formula  for $s_n$   differs from 
one usually used for dyadic martingales,
we refer the reader to \cite{BM} for related discussion.
We also denote
\beq
\label{eq:09}
u_n=\left ( 2 \log\log s_n^2 \right )^{1/2}.
\eeq

The following statement is a special case of Theorems 1  and 2 in \cite{St}.
\begin{fact}
 Let $\cO=\{\phi\in(-\pi,\pi]): s_n(\phi)\to \infty \ \mbox{as} \ n\to \infty\}$    and
$|d_n| \lesssim 1$. Then
\beq
\label{eq:10}
\limsup_{n\to \infty} \frac {f_n(\phi)}{s_n(\phi)u_n(\phi)} \le 1,
\eeq
 for almost all $\phi \in \cO$.
\end{fact}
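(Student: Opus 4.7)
The plan is to follow the classical Stout-type proof of the martingale law of the iterated logarithm via exponential supermartingales and a Borel--Cantelli argument, adapted to the super-dyadic filtration $\{\F_n\}$. Since $|d_j|\lesssim 1$, the elementary inequality $e^x\le 1+x+\tfrac{1}{2}x^{2}e^{|x|}$ applied pointwise, together with $\E(d_j\mid \F_{j-1})=0$, yields for sufficiently small $\lambda>0$ the conditional bound
\beq
\E\!\left(e^{\lambda d_j}\mid\F_{j-1}\right)\le\exp\!\left(\tfrac{\lambda^{2}}{2}(1+C\lambda)\,\E(d_j^{2}\mid\F_{j-1})\right).
\eeq
Multiplying these estimates over $j=1,\dots,n$ shows that
\beq
Z_n^{\lambda}=\exp\!\left(\lambda f_n-\tfrac{\lambda^{2}(1+C\lambda)}{2}s_n^{2}\right)
\eeq
is a positive supermartingale with $\E Z_n^{\lambda}\le 1$.

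Next I would localize by the predictable quadratic variation. Fix $\theta>1$; since $s_{n+1}^{2}$ is $\F_n$-measurable, $\tau_k=\inf\{n:s_{n+1}^{2}>\theta^{k}\}$ is a stopping time with $s_{\tau_k\wedge n}^{2}\le \theta^{k}$ deterministically. Doob's maximal inequality for the stopped supermartingale $Z^{\lambda}_{\tau_k\wedge n}$ gives, for every $\beta>0$,
\beq
\bigl|\{\phi\in\T:\max_{n\le \tau_k(\phi)} f_n(\phi)\ge \tfrac{\beta}{\lambda}+\tfrac{\lambda(1+C\lambda)\theta^{k}}{2}\}\bigr|\le e^{-\beta}.
\eeq
Choosing $\lambda=\sqrt{2\log\log\theta^{k}/\theta^{k}}$ and $\beta=(1+\ve)^{2}\log\log\theta^{k}$ makes the threshold on the left equal to $(1+\ve')\sqrt{2\theta^{k}\log\log\theta^{k}}$ with $\ve'\to 0$ as $\ve,\lambda\to 0$, while the probability bound $e^{-\beta}\asymp (k\log\theta)^{-(1+\ve)^{2}}$ is summable in $k$.

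Borel--Cantelli then provides, for a.e.\ $\phi\in\cO$, an index $K(\phi)$ such that $\max_{n\le \tau_k(\phi)} f_n(\phi)<(1+\ve')\sqrt{2\theta^{k}\log\log\theta^{k}}$ for all $k\ge K(\phi)$. On $\cO$ we have $s_n^{2}\to\infty$, so every sufficiently large $n$ satisfies $\theta^{k-1}<s_n^{2}\le\theta^{k}$ for some $k$, forcing $n\le\tau_k$ and
\beq
\frac{f_n(\phi)}{s_n(\phi)u_n(\phi)}\le \frac{(1+\ve')\sqrt{2\theta^{k}\log\log\theta^{k}}}{\sqrt{\theta^{k-1}}\sqrt{2\log\log\theta^{k-1}}}\longrightarrow (1+\ve')\sqrt{\theta}
\eeq
as $n\to\infty$. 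Taking $\theta\downarrow 1$ and $\ve'\downarrow 0$ along a countable sequence (each contributing a null exceptional set) yields the desired limsup inequality almost everywhere on $\cO$.

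The hard part is the joint randomness of $f_n$ and of the normalizing factor $s_n$: the exponential inequality requires a deterministic $\lambda$, yet the sharp choice depends on the random scale of $s_n^{2}$. The resolution is precisely the stopping-time slicing above, and it relies critically on the \emph{predictability} of $s_n^{2}=\sum_{j\le n}\E(d_j^{2}\mid\F_{j-1})$, so that $\tau_k$ is a bona fide stopping time with $s^{2}_{\tau_k\wedge n}\le\theta^{k}$ holding pointwise rather than only in mean. The boundedness hypothesis $|d_n|\lesssim 1$ enters only through the admissible range of $\lambda$ and ensures that the error factor $(1+C\lambda)$ tends to $1$; without it one would need a truncation argument and an additional layer of Borel--Cantelli.
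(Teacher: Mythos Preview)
Your argument is a correct and carefully organized sketch of the classical exponential--supermartingale proof of the martingale law of the iterated logarithm. However, there is nothing to compare it against in the paper: this statement appears there as \emph{Theorem B}, quoted without proof as ``a special case of Theorems~1 and~2 in \cite{St}''. The authors use it as a black box in Section~4.4 and never reprove it.

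What you have written is in fact the outline of Stout's original argument in \cite{St}: the exponential bound on $\E(e^{\lambda d_j}\mid\F_{j-1})$ under bounded increments, the resulting supermartingale $Z_n^{\lambda}$, the predictable stopping times $\tau_k$ slicing the range of $s_n^{2}$ into geometric shells, Doob's maximal inequality with the scale-adapted choice of $\lambda$, and the Borel--Cantelli conclusion followed by $\theta\downarrow 1$. Your emphasis on the predictability of $s_n^{2}$ (so that $\tau_k$ is a genuine stopping time and $s^{2}_{\tau_k\wedge n}\le\theta^{k}$ holds pointwise) is exactly the point that makes the argument go through, and the super-dyadic nature of the filtration plays no special role beyond the standing hypothesis $|d_n|\lesssim 1$.

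In short: the paper offers no proof here, and your proposal is the standard one it cites.
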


We always identify $\phi$ and the point $e^{i\phi}\in\T$.

\begin{example} Let a premeasure $\mu$ satisfy \eqref{eq:mubelow}.
Denote
\beq
\label{eq:11}
g_n=\sum_{I\in\F_n}\textbf{1}_I\frac {\mu(I)} {|I|},
\eeq
\beq
\label{eq:12}
d_j=2^{-j}(g_j-g_{j-1}) \  \mbox{and} \   f_n = \sum_{j=1}^n d_j.
\eeq
 Then the martingale $\{f_n\}$  meets the conditions of Theorem A.
\end{example}

\begin{remark} The function $u$ in Theorem \ref{th:main}  admits representation
\eqref{eq:05}, so for each arc $I=(e^{i(\theta-\delta)}, e^{i(\theta+\delta)})\subset \T$ 
 the average
$|I|^{-1}\mu(I)$  can be considered as an approximation of   $u((1-\delta)e^{i\theta})$.
Taking this into account one  can observe that the functions $f_n$  from the above example
can be viewed as integral sums for $I_u(1-2^{-2^n}, \cdot)$. 
This martingale can be used as a hint in order to guess how Theorem \ref{th:main}
should be formulated.
However this  prove Theorem \ref{th:main} we need a more developed 
martingale construction.
 \end{remark}

\section{Atomic decomposition}


Let a function $u\in \Harm(\D)$ satisfy \eqref{eq:below} and $u(0)=0$. In this section we decompose the function $I_u(R,\phi)$  into sum of atoms.
Such decomposition (following for example the scheme from \cite{BM})  will lead us to
a martingale approximation of $I_u(R,\phi)$.

\subsection{Preliminary decomposition}
Denote
\beq
\label{eq:13}
r_j=1-2^{-2^j},\ j=0,1,... \ .
\eeq

Let $r_n\le R< r_{n+1}$. We than have
\begin{multline}
\label{eq:14}
I_u(R,\phi)=\int_{1/2}^R\frac{u(re^{i\phi})dr}{(1-r)(\log(1-r))^2}= \\
\sum_{j=1}^n \underbrace{\int_{r_{j-1}}^{r_j}\frac{u(re^{i\phi})}{(1-r)(\log(1-r))^2}dr }_{v_j(\phi)}+\int_{r_n}^R\frac{u(re^{i\phi})dr}{(1-r)(\log(1-r))^2}= \\
=\sum_{j=1}^{n-1}\underbrace{2v_{j+1}(\phi)-v_{j}(\phi)}_{w_j(\phi)}
+ \underbrace{\left(
2v_1(\phi)- v_n(\phi)+ \int_{r_{n}}^R  \frac{u(re^{i\phi})dr}{(1-r)|\log(1-r)|^2} \right)}_{q_n(\phi)}.
\end{multline}
It follows from \eqref{eq:below} that $|q_n(\phi)|\lesssim 1$. Therefore in order to prove Theorem
\ref{th:main}  it suffices to consider
\beq
J_n(\phi)= \sum_1^{n-1}w_j(\phi).
\eeq
We will approximate the sequence $\{J_n\}$   by the sum of super-dyadic martingales.

Using representation \eqref{eq:05} we obtain 
\beq
\label{eq:15}
v_j(\phi)=\int_{-\pi}^\pi A_j(\phi-\theta)d\mu(\theta), \
w_j(\phi)=\int_{-\pi}^\pi B_j(\phi-\theta)d\mu(\theta), \
\eeq
where
\beq
\label{eq:16}
A_j(\psi)=\int_{r_{j-1}}^{r_j}\frac{P(re^{i\psi})dr}{(1-r)(\log(1-r))^2},\quad B_j(\psi)=2A_j(\psi)-A_{j-1}(\psi).
\eeq
Clearly
\beq
\label{eq:17}
\int_{-\pi}^\pi A_j(\phi)d\phi=2^{-j} \ \mbox{and} \ \int_{-\pi}^\pi B_j(\phi)d\phi=0,
\eeq
so the kernels $B_j$ possess the cancellation property.

\subsection{Construction of atoms}
The functions $B_j(\phi)$ are concentrated mainly in the intervals around zero whose length 
is of order $2^{-2^{j-4}}$. We approximate them by functions $\tilde{B}_j(\phi)$ 
which are supported in the corresponding intervals. The following (technical) lemma estimates  
 the error of such approximation.
\begin{lemma}
\label{l:appr}
For every $j$ there exists an even function $\tilde{B_j}(\psi)$ such that
\[\supp(\tilde{B_j})\subset(-\frac 1 {64} 2^{-2^{j-4}},\frac 1 {64} 2^{-2^{j-4}})=:J_j\quad {\rm and}\quad
B_j=\tilde{B_j}\  {\rm on}\  \frac12 J_j,\]
\[
\int_{-\pi}^\pi\tilde{B_j}=\int_{\T}B_j=0,
\quad
|(B_j-\tilde{B_j})'|\lesssim 2^{-2j}.
\]
Moreover, the following estimates hold
\begin{equation}
\label{eq:cor1}
(a)\ |\tilde{B}'|\lesssim 2^{-2j}2^{2\cdot2^j},\quad
(b)\ |\tilde{B}''|\lesssim 2^{-2j}2^{3\cdot2^j},\quad
(c)\ |\tilde{B}_j(\alpha)\alpha\log\frac{1}{\alpha}|\lesssim 2^{-j};
\end{equation}
\begin{equation}
\label{eq:cor2}
\int_{-\pi}^{\pi}|\tilde{B}'_j(\alpha)\alpha\log\frac{1}{\alpha}|d\alpha\lesssim 1.
\end{equation}
\end{lemma}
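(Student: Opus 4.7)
The plan is to obtain $\tilde{B_j}$ from $B_j$ by a smooth truncation inside $J_j$ combined with a small corrector that restores the vanishing mean. I would pick an even smooth cutoff $\chi_j$ with $\chi_j\equiv 1$ on $\tfrac12 J_j$, $\supp\chi_j\subset J_j$ and $|\chi_j^{(k)}|\lesssim|J_j|^{-k}$, together with an even smooth bump $\rho_j$ supported in $J_j\setminus\tfrac12 J_j$ with $\int\rho_j=1$, and set
\[
\tilde{B_j}(\psi)=\chi_j(\psi)B_j(\psi)+c_j\rho_j(\psi),\qquad c_j=-\int\chi_j(\psi)B_j(\psi)\,d\psi.
\]
By construction $\tilde{B_j}=B_j$ on $\tfrac12 J_j$, $\supp\tilde{B_j}\subset J_j$, and $\int\tilde{B_j}=0$ using $\int B_j=0$ from \eqref{eq:17}. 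Evenness is automatic.

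The second step is a careful three-scale analysis of $A_j$. Substituting $s=1-r$ and using $|\log s|\simeq 2^j$ on $s\in[2^{-2^j},2^{-2^{j-1}}]$ together with $P((1-s)e^{i\psi})\simeq s/(s^2+\psi^2)$ gives
\[
A_j(\psi)\simeq 2^{-2j}\int_{2^{-2^j}}^{2^{-2^{j-1}}}\frac{ds}{s^2+\psi^2}.
\]
Splitting the $s$-integral at $s=|\psi|$ yields the three-scale picture
\[
|A_j(\psi)|\simeq\begin{cases}2^{2^j}/2^{2j},&|\psi|\le 2^{-2^j},\\ 1/(2^{2j}|\psi|),&2^{-2^j}\le|\psi|\le 2^{-2^{j-1}},\\ 2^{-2^{j-1}}/(2^{2j}\psi^2),&|\psi|\ge 2^{-2^{j-1}}.\end{cases}
\]
Differentiating once (resp.\ twice) in $\psi$ inside the integral multiplies the local bound by $\simeq\max(|\psi|^{-1},2^{2^j})$ per derivative; the same scheme with $j\to j-1$ describes $A_{j-1}$.

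Using $B_j=2A_j-A_{j-1}$ I would verify (a)--(c) and \eqref{eq:cor2} regime by regime. The middle scale $|\psi|\in[2^{-2^{j-1}},2^{-2^{j-2}}]$ is decisive for (c): there $B_j$ is controlled by $A_{j-1}\simeq 1/(2^{2(j-1)}|\psi|)$ while $|\alpha\log(1/|\alpha|)|\simeq|\alpha|\cdot 2^{j-1}$, yielding $|B_j(\alpha)\alpha\log(1/|\alpha|)|\lesssim 2^{-j}$. The maximum in (a) is attained at $|\psi|\sim 2^{-2^j}$ and equals $2^{-2j}\cdot 2^{2\cdot 2^j}$; (b) is parallel with one extra factor $2^{2^j}$. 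For \eqref{eq:cor2}, splitting into the three dyadic scales each contributes $O(2^{-j})$, summing to $O(1)$. Finally, a tail bound $|B_j(\psi)|\lesssim 2^{-2^{j-2}}/(2^{2(j-1)}\psi^2)$ on $|\psi|\ge|J_j|/4$ forces $|c_j|\lesssim 2^{-c\cdot 2^{j-4}}$, super-exponentially small, so $c_j\rho_j$ and its derivatives contribute negligibly everywhere; the same tail estimate makes $\chi_j'B_j$ and $(1-\chi_j)B_j'$ far smaller than $2^{-2j}$, giving $|(B_j-\tilde{B_j})'|\lesssim 2^{-2j}$ with room to spare.

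The \textbf{main obstacle} is carrying out the second step accurately: the three-scale pictures for $A_j$, $A_{j-1}$, and both $\psi$-derivatives of each are all needed simultaneously, since every claimed bound reflects a tight balance between the $s$- and $\psi$-scales, and losing a single factor $2^{2^j}$ anywhere would be fatal. The precise $2^{-j}$ and $2^{-2j}$ prefactors come from the uniform gain $|\log s|^{-2}\simeq 2^{-2j}$ inside $A_j$, with the remaining power of $j$ either absorbed by the weight $\alpha\log(1/|\alpha|)$ (for (c) and \eqref{eq:cor2}) or recovered by the innermost-scale derivative (for (a), (b)); the shift $j\mapsto j-1$ in $A_{j-1}$ rather than the cancellation $2A_j-A_{j-1}$ is what drives the middle-regime estimates.
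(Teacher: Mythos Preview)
Your proposal is correct and follows essentially the same approach as the paper: both construct $\tilde{B}_j$ as a smooth cutoff of $B_j$ plus a mean-restoring bump supported in $J_j\setminus\tfrac12 J_j$, and both derive the estimates from the same three-regime bounds on $A_j$, $A_j'$, $A_j''$ (the paper states these explicitly and then defers the verification to ``direct inspection,'' whereas you sketch the regime-by-regime check more fully). Your correction constant $c_j=-\int\chi_j B_j$ is in fact the cleaner formula; the paper writes the coefficient as $\int(B_j-\alpha A_j)$, which appears to be a typo for $\int(B_j-\alpha B_j)$.
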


\begin{proof}  Let $r_j$  be given by \eqref{eq:13}. 
Elementary calculations show that
\[
A_j(\phi)\lesssim \begin{cases}
2^{-2j}2^{2^j},\quad &\phi\le 1-r_j,\\
2^{-2j}\phi^{-1},\quad &1-r_j<\phi\le 1-r_{j-1},\\
2^{-2j}\phi^{-2}2^{-2^{j-1}},\quad &1-r_{j-1}<\phi.
\end{cases}
\]
Further, we obtain
\[
|A'_j(\phi)|\lesssim \begin{cases}
2^{-2j}\phi 2^{3\cdot2^j},\quad &\phi\le 1-r_j,\\
2^{-2j}\phi^{-2},\quad &1-r_j<\phi\le 1-r_{j-1},\\
2^{-2j}\phi^{-3}2^{-2^{j-1}},\quad &1-r_{j-1}<\phi;
\end{cases}
\]
and
\[
|A''_j(\phi)|\lesssim \begin{cases}
2^{-2j}2^{3\cdot 2^j},\quad &\phi\le 1-r_j,\\
2^{-2j}\phi^{-3},\quad &1-r_j<\phi\le 1-r_{j-1},\\
2^{-2j}\phi^{-4}2^{-2^{j-1}},\quad &1-r_{j-1}<\phi.
\end{cases}
\]

Let $\alpha$ and $\beta$ be even smooth functions, $0\le\alpha\le1$, such that
\[\alpha=1,\quad{\rm{on}}\ (-2^{-7}2^{-2^{j-4}},2^{-7}2^{-2^{j-4}});\]
\[\supp(\alpha)\subset[-2^{-6}2^{-2^{j-4}},2^{-6}2^{-2^{j-4}}],\quad |\alpha'|\lesssim 2^{2^{j-4}},\  {\rm{and}}\  |\alpha''|\lesssim 2^{2^{j-3}};
\]
\[
\int_{-\pi}^{\pi}\beta=1,\quad \supp(\beta)\subset[-2^{-6}2^{-2^{j-4}},-2^{-7}2^{-2^{j-4}}]\cup[2^{-7}2^{-2^{j-4}},2^{-7}2^{-2^{j-4}}];
\]
\[
|\beta|\lesssim 2^{2^{j-4}},\quad |\beta'|\lesssim 2^{2^{j-3}},\quad {\rm{and}}\ |\beta''|\lesssim 2^{2^{j-2}}.\]
Then we define
\[\tilde{B}_j=\alpha B_j+\beta\int_{-\pi}^\pi (B_j-\alpha A_j).\]
Clearly $\tilde{B}_j$ is an even function.  The required estimates now follows by a direct inspection.
\end{proof}


Now let
\beq
\label{eq:17a}
\tilde{w}_j(\phi)=\int_{-\pi}^{\pi}\tilde{B_j}(\phi-\theta)d\mu(\theta).
\eeq

\begin{lemma}
\label{l:app}
The following inequality holds
\[
|w_j(\phi)-\tilde{w}_j(\phi)|\lesssim 2^{-2j}\]
for any $j$ and $\phi\in(-\pi,\pi)$.
\end{lemma}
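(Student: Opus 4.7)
The plan is to combine the two inputs already at our disposal. First, Lemma \ref{l:appr} supplies the pointwise bound $|(B_j-\tilde B_j)'|\lesssim 2^{-2j}$. Second, the $\kappa$-boundedness of the premeasure $\mu$, which is \eqref{eq:muabove}, gives $|\mu((e^{i\theta},0))|\lesssim |\theta|\log(e/|\theta|)\lesssim 1$ uniformly in $\theta\in(-\pi,\pi)$.

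With these in hand, I would rewrite the difference $w_j(\phi)-\tilde w_j(\phi)$ by applying the integration-by-parts definition \eqref{eq:06} to the smooth kernel $K(\theta):=(B_j-\tilde B_j)(\phi-\theta)$ in place of the Poisson kernel. Formula \eqref{eq:06} depends only on $C^1$-smoothness of the kernel together with the defining axioms of a premeasure, so the same derivation yields
$$w_j(\phi)-\tilde w_j(\phi)=\int_{-\pi}^\pi K(\theta)\,d\mu(\theta)=-\int_{-\pi}^\pi (B_j-\tilde B_j)'(\phi-\theta)\,\mu((e^{i\theta},0))\,d\theta,$$
the minus sign arising from $K'(\theta)=-(B_j-\tilde B_j)'(\phi-\theta)$.

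Taking absolute values inside the integral and inserting the two uniform bounds then gives
$$|w_j(\phi)-\tilde w_j(\phi)|\le \|(B_j-\tilde B_j)'\|_\infty\int_{-\pi}^\pi|\mu((e^{i\theta},0))|\,d\theta\lesssim 2^{-2j}\cdot\int_{-\pi}^\pi |\theta|\log\frac{e}{|\theta|}\,d\theta\lesssim 2^{-2j},$$
uniformly in $\phi$, which is precisely the claim. There is no real difficulty left in the present lemma: all of the technical work was absorbed into Lemma \ref{l:appr}, whose cut-off-and-correction construction of $\tilde B_j$ is precisely designed to keep $(B_j-\tilde B_j)'$ uniformly small. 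Once that pointwise derivative bound is granted, Lemma \ref{l:app} reduces to a one-line integration-by-parts argument using only that $|\mu|$ of a short arc is bounded.
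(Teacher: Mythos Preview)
Your proof is correct and follows essentially the same integration-by-parts strategy as the paper; the only cosmetic difference is that the paper exploits the evenness of $B_j-\tilde B_j$ to center the integration at $\phi$, writing the difference as $\int_0^\pi (B_j-\tilde B_j)'(\theta)\,\mu(\phi-\theta,\phi+\theta)\,d\theta$, whereas you invoke the defining formula \eqref{eq:06} directly. One small correction: the bound $|\mu((e^{i\theta},0))|\lesssim 1$ requires the \emph{two-sided} estimate \eqref{eq:mubelow} (available here because $u$ satisfies \eqref{eq:below}), not the one-sided \eqref{eq:muabove} you cite.
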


\begin{proof}
Clearly (\ref{eq:below}) implies that $|\mu(I)|\lesssim 1 $ for any interval $I$ on the circle.
Since $B_j-\tilde{B}_j$ is an even function and $\mu(\T)=0$ we have
\[
|w_j(\phi)-\tilde{w}_j(\phi)|=\left|\int_{-\pi}^{\pi}(B_j-\tilde{B}_j)(\phi-\theta)d\mu(\theta)\right|
\]\[=\left|\int_0^{\pi}(B_j-\tilde{B}_j)'(\theta)\mu(\phi-\theta,\phi+\theta)\right|\lesssim 2^{-2j}.
\]
\end{proof}
This lemma  implies
\beq
\label{eq:appr2}
\left|\sum_1^n w_j(\phi)-\sum_1^n\tilde{w}_j(\phi)\right|\lesssim 1.
\eeq

We divide the circle into $64\cdot2^{2^k}$ non-overlapping arcs of length
$\frac{1}{64}2^{-2^k}$ and define by $\Df_k$  the collection of these arcs.
Let also $\Df=\cup_k\Df_k$.
We obtain
\beq
\label{eq:lambda}
\tilde{w}_j(\phi)=\int_{-\pi}^\pi\tilde{B}_j(\phi-\theta)d\mu(\theta)=\sum_{I\in\Df_{j-4}}\int_I\tilde{B}_j(\phi-\theta)d\mu(\theta)
=\sum_{I\in\Df_{j-4}}\lambda_I(\phi).
\eeq
Functions $\lambda_I$ are our atoms. For  each $I\in \Df_k$, $k>6$ we have
\[
\supp \lambda_I\subset 3I;
\]
and
\beq
\label{eq:19}
\int_{3I}\lambda_I=\int_{-\pi}^\pi\lambda_I(\phi)d\phi=\int_{-\pi}^\pi\int_I\tilde{B}_j(\phi-\theta)d\mu(\theta)d\phi=0.
\eeq


\section{From atoms to martingales}

\subsection{Construction of martingales}
Relation \eqref{eq:appr2} gives a decomposition of  $\sum_{j=1}^n\tilde{w}_j$ into the sum of atoms $\lambda_I$,
  $I\in \Df$. It may happen that 
  $\supp \lambda_{I_1} \cap \supp \lambda_{I_2} \neq \emptyset$ 
  for (neighboring) $I_1,I_2\in \Df$, $|I_1|=|I_2|$.

Given  $\omega \in \T$  and an arc $I\in \T$ we  denote
$\omega I= \{\omega \zeta; \zeta \in I\}$. Respectively
$\omega \cE= \{\omega I; I\in \cE \}$, $\omega \cE_n= \{\omega I; I\in \cE_n \}$.
The  lemma below follows from a more general statement
 Lemma 2.1.2 in \cite{BM}, see  also
\cite{CWW} (we adjust the formulation for our setting.)

\begin{lemma}
\label{le:app}
There exists a finite partition $\Df=\cup_{s=1}^N V^{(s)}$
and a set of points
$\{\omega_s\}_{s=1}^N\subset \T$ such that
 $V^{(s)}\cap V^{(t)}=\emptyset, s\neq t$ and, 
for each $I\in V^{(s)}\cap \Df_k $, there exists $I'\in \omega_s \cE_k $
for which $\supp\lambda_I\subset 3I\subset I'$. In addition if, for some $s$ and $k$,
$I_1,I_2\in V^{(s)}\cap \Df_k$, $I_1\neq I_2$  then $I_1'\cap I_2'=\emptyset$.
\end{lemma}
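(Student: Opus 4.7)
My plan is to build the required partition in two stages. First, I choose finitely many rotations $\{\omega_s\}$ covering $\Df$, in the sense that every atom $I\in\Df_k$ has $3I\subset I'$ for some $I'\in\omega_s\cE_k$ and some $s$. Second, I refine by a $64$-valued position coloring so that distinct atoms in the same sub-class at the same level are forced into distinct (hence disjoint) containing arcs. Two rotations will suffice for the first stage, and the second stage costs a factor of $64$, giving $N\le 128$.

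For the first stage, take $\omega_1=0$. The tiling $\Df_k$ refines $\cE_k$ naturally, so each $I'\in\cE_k$ is a disjoint union of $64$ atoms of $\Df_k$; a direct inspection shows that $3I\subset I'$ holds for all but the first and the last of these $64$ atoms, which I call the edge atoms of $I'$. To rescue the two edge atoms per super-dyadic arc I choose $\omega_2\in\T$ so that, after rotation by $\omega_2$, the edge atoms of $\cE_k$ sit in the interior of some arc of $\omega_2\cE_k$ at every level $k$ simultaneously. The geometric condition for this is $\{2^{2^k}\omega_2\}\in[1/32,\,31/32]$ for every $k\ge 0$, where $\{\cdot\}$ denotes the fractional part. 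The explicit choice $\omega_2=1/3$ works, since $2^{2^k}\equiv 1\pmod 3$ for $k\ge 1$ and $\equiv 2$ for $k=0$, so the orbit $\{2^{2^k}/3\}$ is confined to $\{1/3,2/3\}\subset[1/32,31/32]$. Set $V_1=\{I\in\Df:3I\subset I'\text{ for some }I'\in\cE_{k(I)}\}$ and $V_2=\Df\setminus V_1$; every $I\in V_s$ then has a unique containing arc $I'_s(I)\in\omega_s\cE_{k(I)}$ with $3I\subset I'_s(I)$.

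For the second stage, assign to each $I\in V_s$ the color $c(I)\in\{0,\ldots,63\}$ equal to the integer part of $64(\ell(I)-\ell(I'_s(I)))/|I'_s(I)|$, where $\ell(\cdot)$ denotes the left endpoint. Since the atoms of $\Df_k$ meeting any single $I'\in\omega_s\cE_k$ have left endpoints equally spaced by $|I'|/64$ inside $I'$, distinct such atoms receive distinct colors. The refined classes $V^{(s,j)}:=\{I\in V_s:c(I)=j\}$, $s\in\{1,2\}$ and $j\in\{0,\ldots,63\}$, therefore contain at most one element per arc of $\omega_s\cE_k$ at each level, and this gives the desired disjointness since distinct arcs of $\omega_s\cE_k$ are pairwise disjoint. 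The delicate ingredient in the whole scheme is the existence of the second rotation $\omega_2$: the bad shifts at different $k$ are governed by the sparse multiplicative sequence $(2^{2^k})$ on $\T$, not by a simple doubling, so no universal dyadic translate works; the point is to exploit a rational $\omega_2$ whose orbit under this sequence is periodic and stays bounded away from the endpoints of $[0,1)$, and $\omega_2=1/3$ is the cleanest such choice.
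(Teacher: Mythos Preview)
Your argument is correct. The paper does not actually prove this lemma; it simply cites the general covering result Lemma~2.1.2 of Ba\~{n}uelos--Moore \cite{BM} (see also \cite{CWW}), which applies to arbitrary families of atoms whose supports lie in fixed dilates of a dyadic-type grid. Your construction is instead tailored to the specific super-dyadic geometry here and is fully self-contained, with the explicit bound $N\le 128$.

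The substantive difference is in how the auxiliary rotation is obtained. The general lemma in \cite{BM} proceeds by a pigeonhole/packing argument valid for any nested grid and any fixed dilation factor, producing finitely many shifted grids without exploiting any arithmetic of the scales. You replace this by a single explicit rotation: the condition you isolate, $\{2^{2^k}\omega_2\}\in[1/32,31/32]$ for all $k$, is exactly what is needed to absorb the two edge atoms per super-dyadic arc, and the choice $\omega_2=1/3$ works because $2^{2^k}\equiv 1\pmod 3$ for $k\ge 1$ forces the orbit into $\{1/3,2/3\}$. This is more elementary and yields a concrete $N$, at the price of being specific to the sequence $(2^{2^k})$; the cited lemma is more robust and would still apply if the scales were changed. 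One small notational point: in the paper's convention $\omega_s\in\T$ acts multiplicatively, so your $\omega_1=0$ and $\omega_2=1/3$ should be read as $\omega_1=1$ and $\omega_2=e^{2\pi i/3}$.
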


Now for each  $s=1,2,\ldots\, , N$  and $n\ge 1$ we define
\beq
\label{eq:15m}
\Lambda_n^{(s)}(\phi)=\sum_{I\in V^{(s)}, 64|I|\ge 2^{-2^n}}\lambda_I(\phi).
\eeq
Then
\beq
\label{eq:16m}
\sum_{j=1}^n\tilde{w_j}(\phi)=\sum_{s=1}^N\Lambda_{n-4}^{(s)}(\phi).
\eeq
We consider the corresponding  martingales with the sequence of (shifted) super-dyadic $\sigma$-algebras 
\beq
\label{eq:17m}
f_n^{(s)}=E(\Lambda_n^{s}|\omega_s\F_{n} ).
\eeq

\subsection{Estimate of the martingale approximation}
Our first aim is to estimate the error
\[|f_n^{(s)}-\Lambda_n^{(s)}|\le \sum_{I\in V^s, 64|I|\ge 2^{-2^n}}|E(\lambda_I|\omega_s\F_{n})-\lambda_I|.\]

\begin{lemma}
\label{l:A}
The following inequality  holds
\beq
\label{eq:18}
|f_n^{(s)}(\phi)-\Lambda_n^{(s)}(\phi)|\lesssim 1 .
\eeq
\end{lemma}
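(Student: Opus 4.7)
The plan starts from the triangle-inequality bound stated just before the lemma,
$$|f_n^{(s)}(\phi)-\Lambda_n^{(s)}(\phi)|\le \sum_{I\in V^{(s)},\, 64|I|\ge 2^{-2^n}}|E(\lambda_I|\omega_s\F_n)(\phi)-\lambda_I(\phi)|.$$
The first observation is that this sum collapses, for each $\phi$, to at most one term per scale. Since $\int\lambda_I=0$ by (\ref{eq:19}) and $\supp\lambda_I\subset 3I\subset I'$ for the unique $I'\in\omega_s\cE_k$ attached to $I$ by Lemma \ref{le:app}, the error $E(\lambda_I|\omega_s\F_n)-\lambda_I$ is supported inside $I'$. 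By the disjointness of the $I'$'s among $I\in V^{(s)}\cap\Df_k$, for each $\phi$ and each $k\le n$ the only $I\in V^{(s)}\cap\Df_k$ that can contribute is the unique $I_k=I_k(\phi)$ (if any) with $\phi\in I'_k$. The problem thus reduces to showing
$$\sum_{k\le n}\bigl|E(\lambda_{I_k}|\omega_s\F_n)(\phi)-\lambda_{I_k}(\phi)\bigr|\lesssim 1.$$

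The estimate proceeds in two cases. When $k=n$, the cell of $\omega_s\cE_n$ containing $\phi$ coincides with $I'_n$, and $\int\lambda_{I_n}=0$ makes the conditional expectation vanish there, so the summand is $|\lambda_{I_n}(\phi)|$. Bounding this is the heart of the matter: writing $\lambda_{I_n}(\phi)=\int_{I_n}\tilde B_j(\phi-\theta)\,d\mu(\theta)$ with $j=n+4$ and integrating by parts against $M(\theta)=\mu((c_{I_n},\theta))$, whose growth $|M(\theta)|\lesssim|\theta-c_{I_n}|\log(e/|\theta-c_{I_n}|)$ comes from (\ref{eq:mubelow}), one controls the boundary term by the pointwise estimate (\ref{eq:cor1})(c) and the integral term by the weighted $L^1$ estimate (\ref{eq:cor2}). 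Both bounds carry the weight $|\alpha|\log(1/|\alpha|)$ that precisely offsets the premeasure growth of $M$, yielding $|\lambda_{I_n}(\phi)|\lesssim 1$. When $k<n$, the cell $J\subsetneq I'_k$ has $|J|=2^{-2^n}\le 2^{-2^{k+1}}$, and one writes
$$\bigl|E(\lambda_{I_k}|\omega_s\F_n)(\phi)-\lambda_{I_k}(\phi)\bigr|\le \tfrac{1}{|J|}\int_J|\lambda_{I_k}(t)-\lambda_{I_k}(\phi)|\,dt.$$
A second integration by parts, using (\ref{eq:cor1})(a,b) for pointwise bounds on $\tilde B_j'$ and $\tilde B_j''$, shows that this oscillation decays geometrically in the gap $n-k$: the super-dyadic shrinkage $|J|\le 2^{-2^{k+1}}$ overwhelms any polynomial-in-$2^{2^k}$ growth in $\tilde B_j',\tilde B_j''$, so that the sum over $k<n$ converges to $O(1)$.

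The principal obstacle is the extreme concentration of the kernels $\tilde B_j$: their peaks have width $\sim 2^{-2^j}$, vastly smaller than the support $J_j$ of width $\sim 2^{-2^{j-4}}$, so any crude $L^\infty$ or $L^1$ norm of $\tilde B_j,\tilde B_j',\tilde B_j''$ is grossly too large to plug in directly. The argument works only because the log-weighted estimates (\ref{eq:cor1})(c) and (\ref{eq:cor2}) are tuned exactly to the premeasure bound (\ref{eq:mubelow}): the weight $|\alpha|\log(1/|\alpha|)$ on the kernel side cancels the weight $|\theta-c_I|\log(e/|\theta-c_I|)$ on the primitive-$M$ side, leaving an $O(1)$ contribution at each scale, with rapid geometric decay once $n-k$ is large enough. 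Verifying this cancellation scale-by-scale, in particular in the borderline regime $n-k=O(1)$, is the technical core of the proof.
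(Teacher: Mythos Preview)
Your proposal follows essentially the same approach as the paper: reduce to one atom per scale, prove the uniform bound $|\lambda_I(\phi)|\lesssim 1$ via integration by parts against the premeasure primitive using (\ref{eq:cor1})(c) and (\ref{eq:cor2}), and control the oscillation at scales $k$ well below $n$ by the derivative bound on $\lambda_I$ coming from (\ref{eq:cor1})(a,b).

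One point to tighten: your two-case split at $k=n$ versus $k<n$ is drawn in the wrong place. The derivative bound gives $|\lambda_I'|\lesssim |I|^{-64}\simeq 2^{2^{k+6}}$, so the oscillation estimate $|E(\lambda_{I_k}|\omega_s\F_n)-\lambda_{I_k}|\le 2^{-2^n}\max|\lambda_{I_k}'|$ is only $\lesssim 1$ once $k\le n-6$; for $k=n-1,\dots,n-5$ it blows up badly. The paper handles this by splitting at $k\le n-6$ (derivative method) versus $n-5\le k\le n$ (use $|\lambda_I|\lesssim 1$ and hence $|E(\lambda_I|\omega_s\F_n)|\lesssim 1$ directly, not just at $k=n$). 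You flag the ``borderline regime $n-k=O(1)$'' as needing care, and indeed it does --- but the fix is simply to apply your $|\lambda_I|\lesssim 1$ bound to those finitely many scales rather than the oscillation argument. With that adjustment your outline matches the paper's proof.
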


\begin{proof}
First we prove that 
\beq
\label{eq:19b}
|\lambda_I(\phi)|\lesssim 1\quad {\rm{ and}}\quad
|\lambda'_I(\phi)|\le |I|^{-64}
\eeq
for any $I\in\Df$ and any $\phi\in(-\pi,\pi]$;

Indeed let $\omega_\phi(\theta)=\int_{\phi}^\theta d\mu=\mu(\phi,\theta)$ 
and let $I=(\alpha, \beta)\in\Df_{j-4}$. We have
\[
|\lambda_I(\phi)|=\left|\int_I\tilde{B}_j(\phi-\theta)d\mu(\theta)\right|\le\]
\[
\left|\int_I\tilde{B}'_j(\phi-\theta)\omega_\phi(\theta)d\theta\right|+
|\tilde{B}_j(\phi-\alpha)\omega_\phi(\alpha)|+|\tilde{B}_j(\phi-\beta)\omega_\phi(\beta)|.\]
Using inequalities (\ref{eq:cor2}) and (\ref{eq:cor1} (c)), we obtain the first estimate
in \eqref{eq:19b}
To prove the second estimate we write
\[
|\lambda'_I(\phi)|\le
\left|\int_I\tilde{B}''_j(\phi-\theta)\omega_\phi(\theta)d\theta\right|+
|\tilde{B}'_j(\phi-\alpha)\omega_\phi(\alpha)|+|\tilde{B}'_j(\phi-\beta)\omega_\phi(\beta)|\]
and use the inequalities (\ref{eq:cor1} (a)) and (\ref{eq:cor1} (b)).

Now we have
\[|E(\lambda_I|\omega_s\F_{n} )-\lambda_I|\le 2^{-2^{n}}\max|\lambda'_I|.\]
This inequality together with \eqref{eq:19b} imply
\begin{multline}
\label{eq:18a}
|f_n^{(s)}(\phi)-\Lambda_n^{(s)}(\phi)|\le
\sum_{k=1}^n\ \sum_{ I\in V^s\cap \Df_k, I'\ni \phi}|E(\lambda_I|\omega_s\F_{n} )(\phi)-\lambda_I(\phi)|
\le \\
\sum_{k=1}^{n-6}\ \sum_{ I\in V^s\cap \Df_k, I'\ni \phi}|E(\lambda_I|\omega_s\F_{n} )(\phi)-\lambda_I(\phi)|+ \\
\sum_{k=n-5}^n\ \sum_{ I\in V^s\cap \Df_k, I'\ni \phi}(|E(\lambda_I|\omega_s\F_{n} )(\phi)|+|\lambda_I(\phi)|)
\lesssim 
2^{-2^n}\sum_{k=1}^{n-6} 2^{2^{k+6}}+1\lesssim 1,
\end{multline}
and \eqref{eq:18} now follows.
\end{proof}


For each martingale $\{f_n^{(s)}\}$ we can now estimate its square function, 
\[
s_n^{(s)}=\left(\sum_{k=1}^n E(|f_k^{(s)}-f_{k+1}^{(s)}|^2|\F_{k})\right)^{1/2}. \]

\begin{lemma}
For each $s=1,...,N$ and $n\ge 1$
\beq
\label{eq:20}
|f_n^{(s)}-f^{(s)}_{n+1}|\lesssim 1 \quad{\rm{and}}\quad s_n^{(s)}\lesssim \sqrt{n}.
\eeq

\end{lemma}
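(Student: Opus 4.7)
I would start by writing the martingale increment as
\[
f_{n+1}^{(s)}-f_n^{(s)} = \bigl(f_{n+1}^{(s)}-\Lambda_{n+1}^{(s)}\bigr) - \bigl(f_n^{(s)}-\Lambda_n^{(s)}\bigr) + \bigl(\Lambda_{n+1}^{(s)}-\Lambda_n^{(s)}\bigr).
\]
The first two differences are pointwise $O(1)$ by the previously established Lemma~\ref{l:A} (applied at levels $n+1$ and $n$), so the problem reduces to estimating the third increment. From the definition \eqref{eq:15m} together with the observation that $64|I|=2^{-2^k}$ for $I\in\Df_k$, only arcs of scale exactly $n+1$ contribute to this difference, and
\[
\Lambda_{n+1}^{(s)}(\phi)-\Lambda_n^{(s)}(\phi) = \sum_{I\in V^{(s)}\cap\Df_{n+1}}\lambda_I(\phi).
\]

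Next, I would appeal to Lemma~\ref{le:app}: for distinct $I_1,I_2\in V^{(s)}\cap\Df_{n+1}$, the enclosing arcs $I_1',I_2'\in\omega_s\cE_{n+1}$ are disjoint and contain the supports $3I_1\supset\supp\lambda_{I_1}$ and $3I_2\supset\supp\lambda_{I_2}$, respectively. Hence the atoms appearing in the sum above have pairwise disjoint supports, so at any $\phi\in\T$ at most one of them is non-zero. Combining this with the uniform bound $|\lambda_I|\lesssim 1$ from \eqref{eq:19b} yields $|\Lambda_{n+1}^{(s)}-\Lambda_n^{(s)}|\lesssim 1$ pointwise, and therefore the first estimate in \eqref{eq:20}.

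The square-function bound then follows at once: since each martingale difference is uniformly bounded, $E(|f_{k+1}^{(s)}-f_k^{(s)}|^2\mid\omega_s\F_k)\lesssim 1$ for every $k$, and summing over $k=1,\ldots,n$ gives $(s_n^{(s)})^2\lesssim n$, i.e.\ $s_n^{(s)}\lesssim\sqrt{n}$.

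I do not anticipate a genuine obstacle here, since the heavy lifting has already been done in Lemmas~\ref{le:app} and \ref{l:A}. The only delicate point worth verifying carefully is the disjoint-support claim for the newly added atoms at scale $n+1$ \emph{within a single class} $V^{(s)}$ — and this is precisely the property built into the Ba\~{n}uelos--Moore type partition of Lemma~\ref{le:app}.
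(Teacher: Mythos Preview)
Your proposal is correct and follows essentially the same approach as the paper: the paper's one-line proof (``follows from Lemma~\ref{l:A} and \eqref{eq:19b}'') is exactly the argument you have written out, with your decomposition making explicit how those two ingredients combine and how Lemma~\ref{le:app} guarantees that at most one atom at scale $n+1$ contributes at any given point. The square-function bound is then immediate, as you say.
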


\begin{proof}
The first inequality follows from Lemma \ref{l:A} and  
\eqref{eq:19b}. The second inequality   is now straightforward. 
 
\end{proof}

\subsection{Mean estimates}

We first prove   inequality  \eqref{eq:mean}  from Theorem \ref{th:main}, namely
\[
\int_{-\pi}^\pi I_u(R,\phi)^2d\phi\lesssim\log\log\frac1{1-R}.
\]
Let, as before, $r_n$'s be given by \eqref{eq:13}  and $r_n\le R < r_{n+1}$. 
  Then (see \eqref{eq:14}, \eqref{eq:appr2}, \eqref{eq:16}, and \eqref{eq:18})
\[
\left | I_u(R,\phi)- \sum_{s=1}^N f_{n-4}^{(s)}(\phi)  \right | \lesssim 1.
\]
Therefore it suffices to prove that
\beq
\label{eq:21}
  \int_{-\pi}^\pi f_{n-4}^{(s)}(\phi) ^2d\phi\lesssim  n  \simeq
\log\log\frac1{1-R}
\eeq
for each $s=1,2,\ldots \, ,N$.

We use the fact that the martingale differences $f_j^{(s)}(\phi)-f_{j-1}^{(s)}(\phi)$
are pairwise orthogonal.
Therefore
\begin{multline}
\label{eq:22}
\int_\T|f_{n-4}^{(s)}(\phi)|^2d\phi=
         \int_\T|\sum^{n-4}_{j=1} (         f_{j}^{(s)}(\phi)- f_{j-1}^{(s)}(\phi))|^2d\phi = \\
   \int_\T\sum_{j=1}^{n-4} |f_{j}^{(s)}(\phi)-f_{j-1}^{(s)}(\phi)|^2d\phi =
            \int_\T  \left(s_{n-4}^{(s)}(\phi)\right)^2 d\phi \lesssim n,
\end{multline}
the last inequality follows from \eqref{eq:20}. This yields  \eqref{eq:mean}.

\subsection{The law of the iterated logarithm}
The relation \eqref{eq:main}  from Theorem \ref{th:main}  now   follows easily
from Theorem A and the estimate \eqref{eq:20}  of the square functon. Indeed, since
$|I_u(R,\phi)-\sum_{s=1}^N f_{n-4}^{(s)}(\phi)|\lesssim 1 $  it suffices to prove that for each $s$
\[
\limsup_n \frac{f_n^{(s)}}{n\log\log n} \lesssim 1
\]
almost everywhere. In case $s^{(s)}_n(\phi) \to \infty$   this follows from relations
\eqref{eq:10}  and  \eqref{eq:20}. Otherwise $s^{(s)}_n(\phi)$ stays bounded and
\[
|f^{(s)}_n(\phi)|\le \sum_{j=1}^n |f^{(s)}_{j}(\phi)-f^{(s)}_{j-1}(\phi)| \le n^{1/2}
s^{(s)}_n(\phi)^{1/2}.
\]
 
This completes the proof of Theorem \ref{th:main}. Proposition \ref{pr:1} 
formulated in the introduction follows readily.

 \section{Lacunary series}

\subsection{Example} We begin with an example showing that Theorem \ref{th:main} is sharp.
Let \[
u(z)=\Re\sum_{n=1}^\infty 2^nz^{2^{2^n}},\quad {\rm{and}}\quad
I_u(R,\phi)=\int_{1/2}^R\frac{u(re^{i\phi})}{(1-r)\left(\log(1-r)\right)^2}dr.\]
It is proved in  \cite{BLMT} that $u\in\HH$. We will show that 
for some $a>0$
\[
\limsup_{R\rightarrow 1}I_u(R,\phi)\left(\log\log\frac1{1-R}\log_4\frac{1}{(1-R)}\right)^{-1/2}\ge a ,\]
for almost all $\phi\in(-\pi,\pi]$. Let
\[
c_n=2^n\int_{1/2}^1\frac{r^{2^{2^n}}}{(1-r)\left(\log(1-r)\right)^2}dr,\]
and $r_k=1-2^{-2^k}$. 
Suppose that $R\in(r_k,r_{k+1})$. We prove first that  
\beq
\label{eq:app}
|I_u(R,\phi)-\sum_{j=1}^k c_j\cos(2^{2^j}\phi)|\lesssim 1.
\eeq
Indeed,
\beq
\label{eq:estI}
|I_u(R,\phi)-I_u(r_k,\phi)|\lesssim\int_{r_k}^{r_{k+1}}\frac{dr}{(1-r)|\log(1-r)|}\lesssim 1.
\eeq
Further,
\[
I_u(r_k,\phi)=\sum_{j=1}^\infty\int_{1/2}^{r_k}\frac{2^jr^{2^{2^j}}}{(1-r)(\log(1-r))^2}dr\cos(2^{2^j}\phi)=\sum_{j=1}^\infty c_{j,k}\cos(2^{2^j}\phi).\]
For $j> k$ we have
\[
c_{j,k}=\int_{1/2}^{r_k}\frac{2^jr^{2^{2^j}}}{(1-r)(\log(1-r))^2}dr\lesssim
2^{j}\left(1-2^{-2^k}\right)^{2^{2^j}}\lesssim 
2^{j}\exp(-2^{2^j-2^k})
\]
and $\sum_{j> k}c_{j,k}\lesssim 1$.
Finally, for $j\le k$ we get
\[
\left|c_{j,k}-c_j\right|=2^j\int_{r_k}^1\frac{r^{2^{2^j}}}{(1-r)(\log(1-r))^2}dr\le
2^j\int_{r_k}^1\frac{dr}{(1-r)(\log(1-r))^2}\lesssim 2^{j-k}.\]
Therefore $\sum_{j\le k}|c_{j,k}-c_j|\lesssim 1$. Inequality (\ref{eq:app}) is proved.

Now we apply the law of the iterated logarithm proved in \cite{W} to the lacunary series 
\beq
\label{eq:series}
\sum_j c_j\cos(2^{2^j}\phi).
\eeq 
First note that 
\begin{multline*}
c_j=2^j\int_{1/2}^1\frac{r^{2^{2^j}}}{(1-r)\left(\log(1-r)\right)^2}dr\leq
2^j\int_{1/2}^{r_{j-1}}\frac{r^{2^{2^j}}}{(1-r)\left(\log(1-r)\right)^2}dr+\\
2^j\int_{r_{j-1}}^1\frac{dr}{(1-r)\left(\log(1-r)\right)^2}\lesssim 2^j\left(1-2^{-2^{j-1}}\right)^{2^{{2^j}}}+2^j2^{-j+1}\lesssim 1.
\end{multline*}
and
\[
c_j\ge 2^jr_j^{2^{2^j}}\int_{r_j}^1\frac{dr}{(1-r)\left(\log(1-r)\right)^2}\gtrsim 1.\]
Thus the coefficients in (\ref{eq:series}) are bounded from $0$ and $\infty$ and 
\[
B_k^2=\frac12\sum_{j=1}^k c_j^2\simeq k.
\]
Finally
\[
\limsup_{R\rightarrow 1}\frac{I_u(R,\phi)}{\left(\log\log\frac1{1-R}\log_4\frac{1}{(1-R)}\right)^{1/2}}\gtrsim
\limsup_{k\rightarrow\infty} \frac{\sum_{j=1}^k c_j\cos(2^{2^j}\phi)}{\left(k \log\log k\right)^{1/2}}.
\]
The last upper limit is larger than a constant for almost all $\phi\in(-\pi,\pi]$ by the law of the iterated logarithm for trigonometric lacunary series.

\subsection{Description of Korenblum harmonic functions represented by lacunary series}
Using results from \cite{K} and \cite{KWW}, we get the following description of lacunary series that represent Korenblum functions.
\begin{proposition}
Let $\{n_k\}_{k=1}^\infty$ be a sequence of positive integers such that $n_{k+1}\ge \lambda n_k$ for each $k$, where $\lambda>1$.
Let
\beq
\label{eq:lacseries}
u(z)=\Re\sum_k c_{n_k}z^{n_k},\quad c_{n_k}\in\C,\eeq
where the series converges in the unit disc.
Then the following conditions are equivalent:
\begin{itemize}
\item[(1)] there exists $\gamma_1$ such that $u(z)\le \gamma_1\log\frac{e}{1-|z|}$ for any $z\in\D$;
\item[(2)] there exists $\gamma_2$ such that $|u(z)|\le \gamma_2\log\frac{e}{1-|z|}$ for any $z\in\D$;
\item[(3)] there exists $\gamma_3$ such that $\sum_{n_k\le N}|c_{n_k}|\le \gamma_3\log N$ for any $N\ge 2$.
\end{itemize}
\end{proposition}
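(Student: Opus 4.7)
The plan is to prove the cycle $(2)\Rightarrow(1)\Rightarrow(3)\Rightarrow(2)$, noting that $(2)\Rightarrow(1)$ is immediate. The substantive content of the proposition is thus that, for lacunary series, the one-sided Korenblum estimate already forces the two-sided one, via the coefficient characterization (3).

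For $(3)\Rightarrow(2)$ I would fix $z$ with $|z|=r$, set $N=\lceil 1/(1-r)\rceil$, and split
\[
|u(z)|\le \sum_{n_k\le N}|c_{n_k}|r^{n_k}+\sum_{n_k>N}|c_{n_k}|r^{n_k}.
\]
The first sum is bounded by $\gamma_3\log N\lesssim \log(e/(1-r))$ directly from (3). For the tail, (3) also gives the pointwise bound $|c_{n_k}|\le\gamma_3\log n_k$, and the gap condition yields $n_{k_0+j}\ge\lambda^j N$ where $n_{k_0}$ is the first index with $n_{k_0}>N$, so $r^{n_{k_0+j}}\le \exp(-\lambda^j)$. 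Hence the tail is majorized by $\gamma_3\sum_{j\ge 0}(\log N+j\log\lambda)\,e^{-\lambda^j}\lesssim \log N$, giving (2) with a constant depending only on $\lambda$ and $\gamma_3$.

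The central step is $(1)\Rightarrow(3)$, which I would deduce by invoking the Kahane--Weiss--Weiss theorem in \cite{KWW}. The key input is that, for lacunary series, an upper bound on $\Re\sum c_{n_k}z^{n_k}$ on a circle $|z|=r$ is equivalent (up to constants depending on $\lambda$) to an $\ell^1$-type control on the coefficients weighted by $r^{n_k}$. The standard device is a Riesz product: for each $N$ and each choice of phases $\eta_k\in\T$, one constructs a non-negative trigonometric polynomial $R_N$ of unit mean whose Fourier coefficients at the lacunary frequencies $\pm n_k$ are essentially $\bar\eta_k/2$. Setting $M=\log(e/(1-r))$, the hypothesis (1) gives $R_N(M-u_r)\ge 0$, so
\[
\int_{-\pi}^\pi R_N(\theta)\,u(re^{i\theta})\,\frac{d\theta}{2\pi}\le M,
\]
and choosing $\eta_k=\overline{c_{n_k}/|c_{n_k}|}$ converts the left-hand side into a constant multiple of $\sum_{n_k\le n_N}|c_{n_k}|r^{n_k}$ modulo controllable cross-terms. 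Taking $r=1-1/n_N$ then yields $\sum_{n_k\le n_N}|c_{n_k}|\lesssim \log n_N$, which is (3).

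The main obstacle is the $(1)\Rightarrow(3)$ direction in the regime $1<\lambda\le 3$, where the naive Riesz product $\prod(1+\Re(\eta_k e^{in_k\theta}))$ develops uncontrolled cross-terms and one needs the more delicate construction and phase-alignment of Kahane--Weiss--Weiss. That machinery is what I import as a black box from \cite{KWW}; combined with the Korenblum premeasure representation from \cite{K} to ensure that one may legitimately pair $u_r$ against trigonometric test polynomials in the limit, the rest of the argument reduces to elementary lacunary bookkeeping.
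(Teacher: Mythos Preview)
Your proposal is correct and follows the same overall cycle $(2)\Rightarrow(1)\Rightarrow(3)\Rightarrow(2)$ as the paper, with the same external input (the Kahane--Weiss--Weiss result) for the nontrivial implication $(1)\Rightarrow(3)$, and the same elementary head/tail split for $(3)\Rightarrow(2)$.

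The packaging of $(1)\Rightarrow(3)$ differs slightly. The paper does not integrate against a Riesz product; instead it first records the a~priori single-coefficient bound $|c_{n_k}|\le C_1\log n_k$ (from \cite[p.~209]{K}), fixes $r_N$ with $r_N^N=1/2$, splits $u(r_Ne^{i\phi})=s_N(\phi)+t_N(\phi)$ into the partial sum over $n_k\le N$ and the tail, bounds $|t_N|\le\beta\log N$ using that coefficient bound, and then invokes Theorem~I of \cite{KWW} in its \emph{pointwise} form: there exists $\phi$ with $s_N(\phi)\ge\alpha\sum_{n_k\le N}|c_{n_k}|r_N^{n_k}$. Combining with the one-sided bound on $u$ at that single $\phi$ gives $(3)$ directly. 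Your dual formulation via a nonnegative test polynomial is of course the mechanism behind the KWW statement, so the two arguments are equivalent; the paper's version has the small advantage that the tail is handled by the separate Korenblum coefficient estimate rather than by spectral bookkeeping of the Riesz product, which keeps the case $1<\lambda\le 3$ entirely inside the cited black box. Finally, your appeal to the Korenblum premeasure representation is not needed here: all integrations are against the smooth function $u_r$ on $|z|=r<1$, so no boundary limit is involved.
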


\begin{proof}
Let $u_r(e^{i\phi})=u(re^{i\phi})$. We show first that $(1)$ implies $(3)$.
It follows from \cite[p.209]{K} that $|c_{n_k}|\le C_1\log n_k$, where $C_1=C_1(\gamma_1)$.

Let $r_N$ be such that $r_N^N=1/2$, we have
\[
u(r_Ne^{i\phi})=\Re \sum_{n_k\le N} c_{n_k}r_N^{n_k} e^{in_k\phi}+\Re\sum_{n_k> N} c_{n_k}r_N^{n_k} e^{in_k\phi}=s_N(\phi)+t_N(\phi).\]
The reminder term can be estimated as follows
\[
\left | t_N(\phi) \right |
\le C_1\sum_{n_k>N}\log n_k 2^{-n_k/N}\le \beta(\gamma_1,\lambda)\log N.
\]
By Theorem I in \cite{KWW} there exists $\alpha=\alpha(\lambda)$ and $\phi\in(-\pi,\pi)$ such that
\[
s_N(\phi)\ge \alpha\sum_{n_k\le N}|c_{n_k}r_N^{n_k}|.\]
(This statement is elementary when $\lambda>2$; the result in \cite{KWW} is more general.)
Thus we have
\[
\sum_{n_k\le N}|a_{n_k}|\le 2\sum_{n_k\le N}|c_{n_k}r_N^{n_k}|\le 2 \alpha^{-1}(u(re^{i\phi})+|t_N(\phi)|)\le \gamma_3\log N.\]

Now assume that $(3)$ holds and let $r_N<r\le r_{N+1}$. Then $(2)$ follows readily
\[
|u(re^{i\phi})|\le\sum_{n_k\le N}|c_{n_k}|+\sum_{n_k>N}\gamma_3\log n_k r^{n_k}\le
 \gamma_2\log \frac{e}{1-r}.
\]
\end{proof}

Thus the Korenblum functions represented by (\ref{eq:lacseries}) satisfy the assumption of Theorem \ref{th:main}.


\section{Non-oscillation: Example}
\subsection{Construction} In this section we prove Theorem \ref{th:contr}.
\begin{lemma}
The series
 \beq
\label{eq:01n}
u(z)= \Re \sum_{n\ge 1} 2^n \frac{z^{2^{2^n}}}{z^{2^{2^n}} -1}=
                    \Re \sum_{n \ge 1} 2^n a_n(z)
\eeq
converges uniformly on compact sets in $\D$ to a function in $\HH$.
\end{lemma}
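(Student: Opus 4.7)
Denote $a_n(z) = z^{2^{2^n}}/(z^{2^{2^n}}-1)$ and $\rho_n = |z|^{2^{2^n}}$. For uniform convergence on compacts, I use the bound $|a_n(z)| \le \rho_n/(1-\rho_n)$, which for $|z|\le r<1$ gives $|a_n(z)|\le r^{2^{2^n}}/(1-r^{2^{2^n}})$; hence $\sum_n 2^n|a_n(z)|$ is dominated by a sum decaying doubly exponentially in $n$. Each $a_n$ is holomorphic in $\D$ (its only singularity lies at $z^{2^{2^n}}=1$, i.e.\ on $\T$), so the limit $u = \Re\sum_n 2^n a_n$ is harmonic in $\D$.

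The Korenblum upper bound rests on the pointwise inequality
\[
\Re \frac{w}{w-1} \le \frac{|w|}{1+|w|}, \qquad |w|<1,
\]
which I obtain by writing $w/(w-1) = 1 + 1/(w-1)$, so that $\Re \frac{w}{w-1} = 1 - (1-\Re w)/|w-1|^2$, and then minimizing $(1-\Re w)/|w-1|^2$ over $|w|=\rho$. A routine monotonicity calculation in the variable $\cos\theta$ shows the minimum equals $1/(1+\rho)$, attained at $w = -\rho$. Applying this with $w = z^{2^{2^n}}$ yields $\Re a_n(z) \le \min(1/2,\ \rho_n)$ for every $z\in\D$.

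To combine these term estimates into a logarithmic bound, I split the series at a critical index. For $|z|\ge 1/2$, let $N = N(z)$ be the largest integer with $2^{2^N}(1-|z|) \le 1$, so that $2^N \le \log_2 \frac{1}{1-|z|}$. Using $\log(1/|z|) \ge 1-|z|$ gives $\rho_n \le \exp(-2^{2^n}(1-|z|))$. For the head, the uniform bound $\Re a_n \le 1/2$ yields
\[
\sum_{n\le N} 2^n \Re a_n(z) \le \sum_{n\le N} 2^{n-1} < 2^N.
\]
For the tail, set $x_k = 2^{2^{N+k}}(1-|z|)$; then $x_1 > 1$ by choice of $N$, and the squaring recursion $x_{k+1} = x_k\cdot 2^{2^{N+k}}$ gives $x_k \ge 2^{2^{N+k-1}}$ for $k\ge 2$. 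Consequently $2^{N+1}\rho_{N+1} \le 2^{N+1}/e$ and $\sum_{k\ge 2} 2^{N+k}\exp(-2^{2^{N+k-1}}) = O(1)$ by doubly exponential decay. Summing,
\[
u(z) \lesssim 2^N + 1 \lesssim \log\frac{e}{1-|z|},
\]
so after rescaling by an absolute constant $u$ lies in $\HH$; the case $|z|<1/2$ is trivial since $u$ is then bounded while $\log(e/(1-|z|))\ge 1$.

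The only genuinely delicate step is the tail control past $n=N+1$: the head reduces at once to the uniform bound $\Re a_n\le 1/2$, and the boundary case is handled by boundedness near the origin. The tail analysis itself is elementary once one observes that a single step of the squaring recursion carries $x_1 = O(1)$ past $x_2 \ge 2^{2^{N+1}}$, after which the remaining terms are negligible.
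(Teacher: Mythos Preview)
Your proof is correct and follows essentially the same strategy as the paper's: both split the series at the critical index $N$ determined by $1-|z|\asymp 2^{-2^N}$, bound the head via $\Re(w/(w-1))\le 1/2$ to get $\sum_{n\le N}2^n\Re a_n(z)\lesssim 2^N$, and show the tail is $O(2^N)$ using the rapid decay of $|z|^{2^{2^n}}$ once $n>N$. The only cosmetic difference is that you derive the sharper pointwise bound $\Re(w/(w-1))\le |w|/(1+|w|)$ to handle the tail, whereas the paper simply notes $|a_{N+1}(z)|\le C$ on the relevant annulus and controls the remaining terms by a ratio estimate; neither refinement is essential.
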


\begin{proof}
We split the unit disk into disjoint annuli
\beqs
\D=\{z:|z|<1/2\}\cup\left (\cup_n A_n\right ), \
               A_n = \{ z: 2^{-2^n} \ge 1-|z| > 2^{-2^{n+1}} \}.
\eeqs
Let $z\in A_n$ and $k>n+1$. Then (by a straightforward calculation)
\beq
\label{eq:03}
 \left | \frac{\Dvakk a_{k+1}(z)}       {\Dvak a_k(z)}
 \right |
        \lesssim 2^{ - \Dvaak}.
\eeq
  Choosing  $n$ sufficiently large we will see that
the ratio of two consequent terms with numbers larger than $n$ does not exceed some
$q<1$. This yields the uniform convergence of     \eqref{eq:01n} on compact sets in $\D$.

A direct estimate shows that $|a_{n+1}(z)|\le C $ when $z\in A_n$ and thus
\[
\sum_{k>n} 2^{k}|a_{k}(z)|  \leq C_q \log \frac 1 {1-|z|}, \ z \in A_n.
\]
To complete the proof  it suffices to estimate  $\Re \sum_{k\leq n} \Dvak a_k(z) $.
Since \[\Re \left(w/(w-1)\right)\leq 1/2\quad {\rm{for}}\ w\in \D\] we obtain:
\[
\Re \sum_{k\leq n} \Dvak a_k(z) \leq  \sum_{k\leq n} \Dvak \leq
         C \log \frac 1 {1-|z|},\ z\in A_n.
\]
  \end{proof}
\subsection{Remark} Since $u(0)=0$  we have $\int_{-\pi}^\pi u(re^{i\theta})d\theta =0$
for all $r\in (0,1)$. The summands in \eqref{eq:01n} have the form
\beq
\label{eq:03aa}
a_n(z)=\Phi(z^{2^{2^n}}); \ \Phi(w)=\frac w {w-1}, \ w\in \D.
\eeq
The function $\zeta=\Phi(w)$ maps the unit disk $\D$ onto the half-plane
$\{\zeta; \Re\zeta <1/2\}$. The asymmetry in the distribution of $\Re \Phi(w)$
forces $u$  to attain huge negative values on "small" parts of $\D$
while being positive (or slightly negative) on the remaining part of $\D$
thus maintaining zero average along the circles $|z|=r$. In this construction
the oscillation along almost all radii disappears.

  \subsection{Main lemma}
Let for brevity $N_n=\Dvaan$ so that $N_{n+1}=N_n^2$. Fix some $a>2$ and let
     \begin{gather*}
        E_n =\{\phi\in (-\pi,\pi]: \cos N_n\phi > 1- n^{-a} \},
      \\
        E_{n,m}=\{\phi\in(-\pi,\pi]: \cos N_n\phi>1-N_m^{-1}\},\\
          \ F_m= \left( \cup_{n\ge m} E_n \right )
          \cup
               \left (\cup_{n< m}E_{n,m}\right ) ,\quad{\rm and}\quad  F= \cap_m F_m.
    \end{gather*}
 Since  $|E_n| \simeq n^{-a/2}$ and $\sum_{1\leq n< m}|E_{n,m}|\rightarrow 0$ ($m\rightarrow\infty$), we have $|F|=0$.
 Theorem \ref{th:contr}   now follows from  the lemma below:
  \begin{lemma}
  For each $\phi \not\in F$
  \beq
  \label{eq:04n}
  \liminf_{R \nearrow 1} \frac {I_u(R,\phi)}{\log \log \frac {1}{1-R}} > 0.
  \eeq
       \end{lemma}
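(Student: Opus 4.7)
Fix $\phi\notin F=\cap_m F_m$, so there is $m$ with $\phi\notin F_m$, giving $1-\cos N_n\phi\geq n^{-a}$ for $n\geq m$ and $1-\cos N_n\phi\geq N_m^{-1}$ for $n<m$. A direct computation with $\Phi(w)=w/(w-1)$ yields the identity
\[
\Re\Phi(\rho e^{i\theta}) = \tfrac12 - \tfrac12 P_\rho(\theta),
\]
where $P_\rho$ is the standard Poisson kernel. Hence $\Re a_n(re^{i\phi}) = \tfrac12-\tfrac12 P_{\rho_n}(\theta_n)$ with $\rho_n=r^{N_n}$ and $\theta_n=N_n\phi$. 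I will show that each annulus $A_j$ contributes a positive amount to $I_u(R,\phi)$, bounded below uniformly for $j$ large, and then sum over $j$.

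\textbf{Per-annulus reduction.} The weight integrates explicitly: substituting $v=\log(1/(1-r))$ gives $\int_{A_j} dr/((1-r)\log^2(1-r)) = 1/(2^{j+1}\log 2)$. Using the ratio bound \eqref{eq:03}, the tail $\sum_{n\geq j+2}2^n\Re a_n$ is super-exponentially small on $A_j$, so
\[
\int_{A_j}\frac{u(re^{i\phi})}{(1-r)\log^2(1-r)}\,dr = \frac{2^{j+1}-1}{2^{j+1}\log 2} - \frac{1}{2}\sum_{n=1}^{j+1}2^n\int_{A_j}\frac{P_{\rho_n}(\theta_n)}{(1-r)\log^2(1-r)}\,dr + o(1).
\]
The ``main term'' is $1/\log 2 + O(2^{-j})$, and the task is to show that the Poisson sum contributes exactly $1/(2\log 2)+o(1)$, leaving $1/(2\log 2)$ per annulus.

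\textbf{Bounding the Poisson sum.} For $n\leq j-1$ the factor $\rho_n$ is extremely close to $1$, with $1-\rho_n\leq N_n/N_j\leq 1/N_{j-1}$; combined with the pointwise bound $P_\rho(\theta)\leq(1-\rho^2)/(2\rho(1-\cos\theta))$ and $1-\cos\theta_n\geq\min(n^{-a},N_m^{-1})$, summation in $n$ shows this portion is $o(1)$ after weighted integration. For $n=j$, substituting $\tau=N_j(1-r)\in[1/N_j,1]$ and using the two-regime estimate $P_{e^{-\tau}}(\theta_j)\lesssim\min(\tau/\delta_j,1/\tau)$ with $\delta_j=1-\cos\theta_j\geq j^{-a}$, one gets $\int_{A_j}P_{\rho_j}(\theta_j)\,\text{weight}\lesssim j^{a/2}/(2^{2j}\log^2 2)$, whose $2^j$-weighted contribution is $o(1)$. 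For $n=j+1$ one has $\rho_{j+1}\leq e^{-1}$ and $|P_{\rho_{j+1}}(\theta_{j+1})-1|\lesssim e^{-\tau'}$ uniformly in $\theta$ (with $\tau'=N_{j+1}(1-r)\geq 1$), which after the same substitution yields $\int_{A_j}P_{\rho_{j+1}}(\theta_{j+1})\,\text{weight} = 1/(2^{j+1}\log 2)+O(2^{-2j})$; multiplied by $2^{j+1}/2$ this contributes exactly $1/(2\log 2)+o(1)$.

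\textbf{Conclusion and main obstacle.} Collecting, $\int_{A_j}u(re^{i\phi})/((1-r)\log^2(1-r))\,dr = 1/(2\log 2)+o(1)$ as $j\to\infty$. Summing over $j$ from a threshold $j_0=j_0(m)$ up to $K$ with $R\in A_K$ (so $K\asymp\log_2\log_2\frac{1}{1-R}$) and absorbing the bounded contribution from $j\leq j_0$, I obtain $I_u(R,\phi)\geq K/(2\log 2)-O(1)\gtrsim\log\log\frac{1}{1-R}$, whence the $\liminf$ is strictly positive. The main technical difficulty is the quantitative treatment of the transitional index $n=j+1$: the crude pointwise bound already gives $2^{j+1}\int_{A_j}P\cdot\text{weight}=O(1)$, a quantity comparable in magnitude to the main term, so one must extract the precise cancellation $P_{\rho_{j+1}}\approx 1$ with error $O(2^{-2j})$ in order to show that the net per-annulus contribution is bounded below by a strictly positive constant rather than merely controlled in absolute value.
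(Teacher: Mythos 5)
Your argument is correct and, at bottom, it is the paper's own argument in different bookkeeping: you use the same decomposition into the annuli $1-r\in(N_{j+1}^{-1},N_j^{-1}]$ and the same frequency-by-frequency analysis with the same critical indices ($n\le j-1$ harmless, $n=j$ and $n=j+1$ delicate, $n\ge j+2$ negligible via \eqref{eq:03}). The difference is organizational: the paper only proves positivity of the terms with $n<l$, extracts a positive constant from the term $n=l$ by splitting the annulus at $r^*$ with $(r^*)^{N_l}=1-\tfrac12 l^{-a}$, and shows $|i_{l,l+1}|$ is small, whereas you use the identity $\Re\frac{w}{w-1}=\tfrac12-\tfrac12 P_{|w|}(\arg w)$ to evaluate each annular block asymptotically, getting the cleaner (and sharper) statement that each full annulus contributes $\tfrac1{2\log 2}+o(1)$. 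Your three Poisson estimates check out: for $n\le j-1$ the bound $P\lesssim (1-\rho_n)/(1-\cos\theta_n)$ with $1-\cos\theta_n\ge\min(n^{-a},N_m^{-1})$ gives $o(1)$ after summation, the $n=j$ bound $\lesssim 2^{-j}j^{a/2}$ is right, and the $n=j+1$ cancellation $P_{\rho_{j+1}}\approx 1$ with error $O(\rho_{j+1})\le O(e^{-\tau'})$ is exactly the paper's smallness of $i_{l,l+1}$ seen from the other side. The one point you gloss over is the final, incomplete annulus: if $R\in A_K$, then $I_u(R,\phi)$ contains only the piece $[1-N_K^{-1},R]$ of $A_K$, so you cannot literally ``sum over $j$ up to $K$''; a separate lower bound for this partial piece is needed, and this is precisely what the paper's second proposition (the estimates for $\tilde i_j$) provides. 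In your framework the fix is one line and uses only bounds you already have: on the partial interval the constant contributions $2^{n-1}$ are nonnegative, and the subtracted Poisson integrals are integrals of nonnegative functions, hence no larger than over the full annulus, so the partial block is bounded below by $-\tfrac1{2\log 2}-o(1)$; with that sentence added, your proof is complete.
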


 \begin{proof}
       We fix $\phi  \not \in F$. There exists $m$ such that
 $\phi  \not \in F_m$.

For each $l$ we denote
  \beq
  \label{eq:06n}
  I_l(\phi)= \int_{1-N_l^{-1}}^{1-N_l^{-2}} \frac {u(re^{i\phi})}{(1-r) \left ( \log \frac 1 {1-r} \right )^2}dr.
  \eeq
Assume  $R=1-2^{-2^t}$, $t>m$, and  let  $ \lfloor  t \rfloor $ stay for the integer part of
$t$.
We then have
\beqs
I_u(R,\phi)=\sum_{l<\lfloor t \rfloor} I_l(\phi) + \int_{1-N_{\lfloor t \rfloor}^{-1}}^R\frac {u(re^{i\phi})}{(1-r) \left ( \log \frac 1 {1-r} \right )^2}dr.
\eeqs

We will prove that there exist  constants $\gamma, \Gamma>0$
and a number $l_0>m$  such that, for $l>l_0$
\beq
\label{eq:08a}
I_l(\phi)> \gamma,
\eeq
and
\beq
\label{eq:09a}
 \int_{1-N_{\lfloor t \rfloor }^{-1}}^R\frac {u(re^{i\phi})}{(1-r) \left ( \log \frac 1 {1-r} \right )^2}dr> 
 -\Gamma, \ \mbox{when} \ t>l_0.
 \eeq
Then \eqref{eq:04n} will  follow readily.
\end{proof}
\subsection{Technical details}
We have
\[
 I_l(\phi)=
  \sum_{j=1}^\infty \Re \int_{1-N_l^{-1}}^{1-N_l^{-2}} 2^j
        \frac {(re^{i\phi})^{N_j}} {(re^{i\phi})^{N_j}-1} \ \frac {dr}{(1-r) \left (
                \log \frac 1 {1-r}                                                           \right )^2}=
                 \sum_{j=1}^\infty i_{l,j}(\phi),
\]
  and  similarly
   \[
\int_{1-N_{\lfloor t \rfloor }^{-1}}^R\frac {u(re^{i\phi})}{(1-r) \left ( \log \frac 1 {1-r} \right )^2}dr
=\sum_{j=1}^\infty \tilde{i}_{j}(\phi).\]

Inequalities  \eqref{eq:08a} and \eqref{eq:09a}  follow from the propositions below.

\begin{proposition}
\label{prop:a}
 Let $\phi\not\in F_m$. There exist $\gamma>0$ and $l_0>m$ such that for all  $l>l_0$
\beq
\label{eq:11n}
i_{l,j}(\phi)>0,\  \mbox{\rm if} \ j<l,
\eeq
\beq
\label{eq:12n}
i_{l,l}(\phi)  > 2 \gamma  ,
\eeq
and
\beq
\label{eq:13n}
\sum_{j>l}| i_{l,j}(\phi)| < \gamma.
\eeq
\end{proposition}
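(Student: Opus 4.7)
The key tool is the algebraic identity
\[
\Re\frac{w}{w-1}=\frac12-\frac{1-|w|^2}{2|w-1|^2},\qquad w\in\D,
\]
applied to $w=r^{N_j}e^{iN_j\phi}$. Together with the exact evaluation $\int_{1-N_l^{-1}}^{1-N_l^{-2}}\frac{dr}{(1-r)\log^2\frac{1}{1-r}}=\frac{1}{2\log N_l}$, it yields the decomposition
\[
i_{l,j}(\phi)=\frac{2^j}{4\log N_l}-\frac{2^j}{2}K_{l,j}(\phi),\qquad K_{l,j}(\phi):=\int_{1-N_l^{-1}}^{1-N_l^{-2}}\frac{1-r^{2N_j}}{|r^{N_j}e^{iN_j\phi}-1|^2}\cdot\frac{dr}{(1-r)\log^2\frac{1}{1-r}}\ge 0.
\]
The first term is the expected positive contribution, and the plan is to bound the nonnegative correction $K_{l,j}(\phi)$. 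Setting $c_j:=1-\cos N_j\phi$, the assumption $\phi\notin F_m$ provides $c_j\ge j^{-a}$ for $j\ge m$ and $c_j\ge N_m^{-1}$ for $j<m$. Combining $|w-1|^2\ge 2|w|c_j$, $1-r^{2N_j}\le 2N_j(1-r)$, $\int_{1-N_l^{-1}}^{1-N_l^{-2}}\frac{dr}{\log^2\frac{1}{1-r}}\le\frac{1}{N_l\log^2 N_l}$, and $|w|\ge(1-N_l^{-1})^{N_j}$ gives
\[
K_{l,j}(\phi)\le\frac{N_j}{(1-N_l^{-1})^{N_j}\,c_j\,N_l\log^2 N_l}.
\]

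For (\ref{eq:12n}) take $j=l$: then $(1-N_l^{-1})^{N_l}\ge(2e)^{-1}$ and $c_l\ge l^{-a}$ since $\phi\notin E_l$ (valid once $l\ge m$). It follows that
\[
\tfrac{2^l}{2}K_{l,l}(\phi)\lesssim\frac{l^a}{2^l\log^2 2}\longrightarrow 0,
\]
so $i_{l,l}(\phi)\ge\frac{1}{4\log 2}-o(1)$, and I set $2\gamma=\frac{1}{8\log 2}$. For (\ref{eq:11n}) with $j<l$, the estimate $N_j\le N_{l-1}=N_l^{1/2}$ forces $(1-N_l^{-1})^{N_j}\to 1$, and the ratio of correction to main term becomes
\[
\frac{(2^j/2)K_{l,j}(\phi)}{2^j/(4\log N_l)}\lesssim\frac{N_j}{c_j\,N_l\log N_l}\le\frac{(l-1)^a}{N_l^{1/2}\log N_l}\longrightarrow 0
\]
uniformly in $j<l$; thus each $i_{l,j}(\phi)>0$ once $l>l_0$.

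For (\ref{eq:13n}) the same decomposition is useless because the "main" term grows in $j$; I instead bound $|a_j(re^{i\phi})|\le r^{N_j}/(1-r^{N_j})$ pointwise. On the integration range $r^{N_j}\le\exp(-N_j/N_l^2)$. For $j=l+1$ the substitution $\mu=N_l^2(1-r)\in[1,N_l]$ turns the integral into $\int_1^{N_l}\frac{e^{-\mu}\,d\mu}{\mu(2\log N_l-\log\mu)^2}\lesssim\log^{-2}N_l$, so $|i_{l,l+1}(\phi)|\lesssim 2^{-l}$. For $j=l+k$ with $k\ge 2$, $N_j/N_l^2=N_l^{2^k-2}$ is hyper-large, so $r^{N_j}\le\exp(-N_l^{2^k-2})$, and the tail $\sum_{k\ge 2}2^{l+k}\exp(-N_l^{2^k-2})/\log N_l$ is dominated by its $k=2$ summand, which tends to $0$. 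Hence $\sum_{j>l}|i_{l,j}(\phi)|\to 0$ and (\ref{eq:13n}) follows.

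The principal obstacle is (\ref{eq:12n}): the positive contribution is only the constant $1/(4\log 2)$, so the correction $K_{l,l}$ must be driven to zero quantitatively. This barely works thanks to the quantitative lower bound $c_l\ge l^{-a}$ supplied by the exclusion $\phi\notin E_l$; the polynomial penalty $l^a$ is comfortably absorbed by $\log^{-2}N_l=(2^l\log 2)^{-2}$. The gauge $n^{-a}$ in the definition of $E_n$ is chosen precisely to make this margin available, and any $a>2$ suffices.
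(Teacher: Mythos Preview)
Your proof is correct and follows a genuinely different route from the paper for parts \eqref{eq:11n} and \eqref{eq:12n}. The paper works directly with the representation
\[
\Re a_j(re^{i\phi})=\frac{r^{N_j}\bigl(r^{N_j}-\cos N_j\phi\bigr)}{r^{2N_j}-2r^{N_j}\cos N_j\phi+1}
\]
and argues by sign: for \eqref{eq:11n} it checks that $r^{N_j}>\cos N_j\phi$ on the range of integration when $j<l$; for \eqref{eq:12n} it splits the integral at a point $r^*$ satisfying $(r^*)^{N_l}=1-\tfrac12 l^{-a}$, controls the possibly negative piece via $x/(x^2+y^2)\ge -1/(2y)$, and shows the remaining piece is $\simeq 1$. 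Your identity $\Re\frac{w}{w-1}=\tfrac12-\frac{1-|w|^2}{2|w-1|^2}$ replaces this case analysis by a single decomposition into an explicit positive constant and a nonnegative Poisson-type correction $K_{l,j}$, which you then bound uniformly using $|w-1|^2\ge 2|w|c_j$ and $1-r^{2N_j}\le 2N_j(1-r)$. This is cleaner and more unified: the same estimate handles \eqref{eq:11n} and \eqref{eq:12n} at once, and it makes transparent why the choice $c_j\ge j^{-a}$ is exactly what is needed (the factor $l^a$ is swallowed by $\log^{-2}N_l\simeq 4^{-l}$). The paper's argument, by contrast, gives a slightly sharper picture of where the integrand changes sign. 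For \eqref{eq:13n} the two arguments are essentially the same: both isolate $j=l+1$, use $|a_j|\le r^{N_j}/(1-r^{N_j})$, and observe that the contribution is $\lesssim 2^{-l}$, while the tail $j\ge l+2$ is super-exponentially small.
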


\begin{proposition}  Let $\phi\not\in F_m$.  There exist $\Gamma>0$ and $l_0>m$
such that for $\lfloor t\rfloor>l_0$
\beq
\label{eq:11a}
 \tilde{i}_{j}(\phi)>0 \ \mbox{\rm if} \ j<\lfloor t\rfloor,
\eeq
\beq
\label{eq:12a}
 \tilde{i}_{\lfloor t\rfloor}>-\Gamma/2,
\eeq
and
\beq
\label{eq:13a}
 \sum_{j>\lfloor t\rfloor}|\tilde{i}_{j}(\phi)|<\Gamma/2,\ \mbox{\rm if}\ \ l>l_0.
\eeq
\end{proposition}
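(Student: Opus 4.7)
The plan is to prove this proposition along the same lines as Proposition \ref{prop:a}, now on the residual interval $[1-N_{\lfloor t\rfloor}^{-1},R]$. Since $2^t\in[2^{\lfloor t\rfloor},2^{\lfloor t\rfloor+1})$ and $N_{\lfloor t\rfloor+1}=N_{\lfloor t\rfloor}^2$, this interval is contained in $[1-N_{\lfloor t\rfloor}^{-1},1-N_{\lfloor t\rfloor}^{-2}]$. I would work pointwise with the identity
\[
\Re\!\frac{w}{w-1}=\frac{\rho^2-\rho\cos\alpha}{(1-\rho)^2+2\rho(1-\cos\alpha)},\qquad w=r^{N_j}e^{iN_j\phi},\ \rho=r^{N_j},\ \alpha=N_j\phi,
\]
together with the non-resonance supplied by $\phi\notin F_m$: $1-\cos(N_j\phi)\ge j^{-a}$ for $j\ge m$ and $1-\cos(N_j\phi)\ge N_m^{-1}$ for $j<m$.

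For \eqref{eq:11a} with $j<\lfloor t\rfloor$, I would show the integrand is pointwise positive. On the interval $\rho\ge(1-N_{\lfloor t\rfloor}^{-1})^{N_j}$, and $N_j/N_{\lfloor t\rfloor}\le N_{\lfloor t\rfloor-1}/N_{\lfloor t\rfloor}=2^{-2^{\lfloor t\rfloor-1}}$ is super-exponentially small, so $1-\rho$ is far smaller than the non-resonance gap $1-\cos\alpha$ once $l_0$ is large enough; thus $\rho>\cos\alpha$ and $\rho^2-\rho\cos\alpha>0$. For \eqref{eq:13a} with $j>\lfloor t\rfloor$, $\rho\le(1-N_{\lfloor t\rfloor+1}^{-1})^{N_j}$, which is $\le e^{-1}$ at $j=\lfloor t\rfloor+1$ and decays super-exponentially for larger $j$; then $|\Re(w/(w-1))|\lesssim\rho$, and multiplying by $2^j$ and integrating against the weight (whose mass is $\simeq 2^{-\lfloor t\rfloor}$) gives $|\tilde i_j|\lesssim 2^{j-\lfloor t\rfloor}\rho$, a geometric-like tail of bounded total mass, absorbable into $\Gamma/2$.

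The delicate step, and the main obstacle, is \eqref{eq:12a}: at $j=\lfloor t\rfloor$ the values $\rho$ and $\cos\alpha$ can be comparable, so $\Re(w/(w-1))$ has no definite sign. Here I would use $|w-1|^2\ge 2\rho(1-\cos\alpha)$ and $|\rho-\cos\alpha|\le(1-\rho)+(1-\cos\alpha)$ to obtain
\[
\left|\Re\frac{w}{w-1}\right|\le \frac{1-\rho}{2(1-\cos\alpha)}+\frac12.
\]
Then $1-\cos\alpha\ge j^{-a}$ (since $j=\lfloor t\rfloor\ge m$) and $\int_{1-N_j^{-1}}^{1-N_j^{-2}}(1-r)^{-1}(\log\frac{1}{1-r})^{-2}\,dr\simeq 2^{-j}$ yield, after multiplication by $2^j$, an absolute constant contribution from the $1/2$ term and an $o(1)$ contribution from the $(1-\rho)/(1-\cos\alpha)$ term. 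This bounds $|\tilde i_{\lfloor t\rfloor}(\phi)|$ by some $\Gamma/2=\Gamma(m)/2$, as required. The non-resonance condition $\phi\notin F_m$ is used precisely here to prevent $|w-1|$ from collapsing; parts \eqref{eq:11a} and \eqref{eq:13a} are routine once $\rho$ is close to $1$ or to $0$ on the relevant range.
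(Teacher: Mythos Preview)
Your proposal is correct. Parts \eqref{eq:11a} and \eqref{eq:13a} follow exactly the pattern of the paper's proofs of \eqref{eq:11n} and \eqref{eq:13n}: for $j<\lfloor t\rfloor$ the inequality $\rho>\cos\alpha$ gives pointwise positivity, and for $j>\lfloor t\rfloor$ the smallness of $\rho=r^{N_j}$ controls the tail.

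The one genuine difference is your treatment of \eqref{eq:12a}. The paper proceeds by carrying over the splitting device from the proof of \eqref{eq:12n}: choose $r^*$ with $(r^*)^{N_{\lfloor t\rfloor}}=1-\tfrac12\lfloor t\rfloor^{-a}$, use $x/(x^2+y^2)\ge -1/(2y)$ on $[1-N_{\lfloor t\rfloor}^{-1},r^*]$ to get a lower bound of order $-2^{-\lfloor t\rfloor}\lfloor t\rfloor^{a/2}\log\lfloor t\rfloor\to 0$, and observe that on $[r^*,R]$ (possibly empty) the integrand is nonnegative. You instead bound the absolute value directly via
\[
\Bigl|\Re\frac{w}{w-1}\Bigr|\le\frac{1-\rho}{2(1-\cos\alpha)}+\frac12,
\]
and then integrate: the $\tfrac12$ term contributes an absolute constant, while the first term contributes $\lesssim 2^{-j}j^{a}\to 0$ (using $\int\frac{1-r^{N_j}}{1-r}\,dr\lesssim 1$ over the interval). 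This is arguably cleaner for \eqref{eq:12a}, and yields a two-sided bound rather than just the one-sided estimate needed. Note, however, that your inequality would \emph{not} suffice for the companion statement \eqref{eq:12n}, where a strictly positive lower bound $i_{l,l}>2\gamma$ is required; there the paper's $r^*$-splitting is essential to extract the positive contribution $j_2\simeq 1$. A minor point: your bound gives an absolute $\Gamma$, so the parenthetical ``$\Gamma(m)$'' is unnecessarily cautious.
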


\noindent{\em Proof of \eqref{eq:11n} and \eqref{eq:11a}}.
Denote $c_j(\phi)= \cos N_j\phi.$
We have
\beqs
i_{l,j}(\phi) =
 \int_{1-N_l^{-1}}^{1-N_l^{-2}} 2^j
        \frac{r^{N_j}\left ( r^{N_j}- c_j(\phi) \right )}
       {r^{2N_j}-2r^{N_j}c_j(\phi)+1}
       \ \frac {dr}{(1-r) \left (
                \log \frac 1 {1-r}                                                           \right )^2}.
\eeqs
Then, for $r> 1-N_l^{-1}$, and $m<j<l$ we obtain
\beqs
r^{N_j} \geq \left ( 1- N_j^{-2}\right )^{N_j }> 1- 2^{-2^j}  > 1-j^{-a} > c_j(\phi), \ 
{\rm when} \ \phi \not\in F_m.
\eeqs
For $j\le m<l$ we have
\beqs
r^{N_j} \geq \left ( 1- N_m^{-2}\right )^{N_j } >1- N_m^{-1} > c_j(\phi) 
\eeqs
for any $\phi\not\in F_m$.
Hence for all $j<l$ we have $\Re a_j(re^{i\phi})>0$ when $r>1-N_l^{-1}$,  
and $\phi\not \in F_m$, which implies both \eqref{eq:11n} for all $l>m$ and  \eqref{eq:11a}.


\medskip


\noindent{\em Proof of (\ref{eq:12n})}.
We fix $l>m$ and choose   $r^*\in (1-N_l^{-1}, 1-N_l^{-2})$   so that
\beq
\label{eq:12b}
(r^*)^{N_l}=1-\frac 12l^{-a}.
\eeq
For $\phi\not\in F_m$ we have $c_l(\phi)<c_l:=1-l^{-a}$ and
\begin{multline*}
i_{l,l}(\phi) \geq
 \int_{1-N_l^{-1}}^{1-N_l^{-2}} 2^l
        \frac{r^{N_l}\left ( r^{N_l}-c_l \right )}
       {r^{2N_l}-2r^{N_l}c_l+1}
       \ \frac {dr}{(1-r)    \left (
                \log \frac 1 {1-r}                                                           \right )^2      }  = \\
     \left (   \int_{1-N_l^{-1}}^{r^*}  +  \int_{r^*}^{1-N_l^{-2}} \right )
                2^l
        \frac{r^{N_l}\left ( r^{N_l}-c_l \right )}
            {(r^{N_l}-c_l)^2+1-c_l^2}
               \ \frac {dr}{(1-r)    \left (
                \log \frac 1 {1-r}                                                           \right )^2      } \\
                 = j_1+j_2,
\end{multline*}

Applying the inequality $x(x^2+y^2)^{-1}\ge -(2y)^{-1}$, we have
\[
  \frac{ r^{N_l}- c_l }
            {(r^{N_l}-c_l)^2+ (1-c_l^2)} \ge -  l^{a/2}.
 \]
             We  also have $\log 1/(1-r) \simeq 2^l$, for $ r\in      (1-N_l^{-1}, 1-N_l^{-2})$.
Therefore
\beqs
j_1 \gtrsim -2^{-l} l^{a/2} \int_{1-N_l^{-1}}^{r^*} \frac {dr}{1-r} \gtrsim -2^{-l}l^{a/2} a \log l.
\eeqs
The right-hand side here can be made  arbitrary small by choosing sufficiently large $l_0$.

To complete the proof of \eqref{eq:12n}  it remains to show that
\beq
\label{eq:18an}
j_2 \simeq 1.
\eeq 

For $r>r^*$ and $c_l(\phi)<c_l$ we have
\[
 1\ge \frac{r^{N_l}\left ( r^{N_l}- c_l(\phi) \right )}
       {r^{2N_l}-2r^{N_l}c_l(\phi)+1}  > \frac{1-\frac12 l^{-a}}{4-\frac32 l^{-a}}>\frac 1 4. 
           \]

Hence
\beqs
j_2 \eqsim
  2^{-l}  \int_{r^{*}}^{1-N_l^{-2}}         \frac {dr}{(1-r)}.
\eeqs
Relation \eqref{eq:18an} now  follows from
\[
 2^{-l}  \int_{1-N_l^{-1}}^{1-N_l^{-2}}          \frac {dr}{(1-r)} = \frac{\log 2}{2}; \quad
  2^{-l}    \int_{1-N_l^{-1}}^{r^{*}}         \frac {dr}{(1-r)} \to 0, \ \mbox{as} \ l \to \infty.
 \]


\noindent{\em Proof of (\ref{eq:12a})} can be done in a similar
way if one chooses
$r^*$ so that
$ (r^*)^{N_{\lfloor t \rfloor}}=1-\frac 12 \lfloor t
\rfloor^{-a}
$
and repeats the above reasonings.


\noindent{\em Proof of (\ref{eq:13n}) and (\ref{eq:13a})}. These inequalities
can be proved in the same manner so we restrict ourselves to (\ref{eq:13n})
only.

 First we estimate the sum $\sum_{j>l+1} |i_{l,j}(\phi)|$.
Inequality (\ref{eq:03}) implies that for $l>l_0$
\[
\sum_{j>l+1} |i_{l,j}(\phi)| \lesssim 2^{-l}
\]
and the contribution of this sum can be done arbitrary small by choosing $l_0$ large enough.

It remains to estimate
\beqs
i_{l,l+1}(\phi) =
 \int_{1-N_l^{-1}}^{1-N_l^{-2}} 2^{l+1}
        \frac{r^{N_{l+1}}\left ( r^{N_{l+1}}- c_{l+1}(\phi) \right )}
       {r^{2N_{l+1}}-2r^{N_{l+1}}c_{l+1}(\phi)+1}
       \ \frac {dr}{(1-r) \left (
                \log \frac 1 {1-r}                                                           \right )^2}.
\eeqs
Since $r^{N_{l+1} }< 2^{-1}$ for $r\in (1-N_l^{-1}, 1-N_l^{-2})$ we have
\begin{multline*}
| i_{l,l+1}(\phi) |  \lesssim 2^{-l}  \int_{1-N_l^{-1}}^{1-N_l^{-2}}
      \frac{r^{N_{l+1}}}{1-r}dr= \\
             2^{-l}  \int_{1-N_l^{-1}}^{\rho_l}
      \frac{r^{N_{l+1}} dr}{1-r}  +   2^{-l}  \int_{\rho_l}^{1-N_l^{-2}}
      \frac{r^{N_{l+1}} dr}{1-r},
\end{multline*}
here $\rho_l=1-\log\frac{1}{\ve} N_l^{-2}\in (1-N_l^{-1}, 1-N_l^{-2})$.  Then for all sufficiently large $l$ we obtain
$r^{N_{l+1}}< 2\ve$   when  $r\in (1-N_l^{-1}, \rho_l)$   and thus the first integral is less than $\const\,\ve$.
Finally,
\[
2^{-l}  \int_{\rho_l}^{1-N_l^{-2}}
      \frac{ dr}{1-r}=2^{-l}\log\log\frac 1{\ve}  < \ve,
\]
when $l$ is large enough.
Choosing $\ve$ small enough and $l_0=l_0(\epsilon)$ large enough,    we obtain
the desired inequality  (\ref{eq:13n}). 


This completes the proof of Theorem  \ref{th:contr}

\subsection{Concluding remark}

It is interesting to compare  the behavior of the function defined by \eqref{eq:01n} with the results on radial growth obtained in \cite{BLMT}.
Calculations of the last section show that there exists a set $F\subset[0,1]$ such that
\[|F\cap[r,1]|=O\left((1-r)^2\log\left(\frac 1{1-r}\right)^c\right)\quad (r\rightarrow 1)\]
and
\[
\liminf_{r\nearrow 1,\ r\not\in F}\frac{u(re^{i\phi})}{\log\frac 1{1-r}}>0\]
for almost each $\phi\in(-\pi,\pi]$.
Thus function $u$ growth as $\log \frac1{1-r}$ along almost every radius when we delete a system of very thin rings from the unit disc.


\begin{thebibliography}{99}

\bibitem{AP} J.M.~Anderson, L.D.~Pitt, Probabilistic behaviour of functions in the Zygmund classes
$\Lambda^*$ and $\lambda^*$, {\em Proc. London Math. Soc.}, (3) {\bf 59} (1989), no. 3, 558--592.



 \bibitem{B} R.~Bass,  {\em Probabilistic techniques in analysis},   Springer-Verlag, New York, 1995.  


\bibitem{BKM} R.~Ba\~{n}uelos, I. Klemes, and C.N.~Moore, An analogue for harmonic functions of Kolmogorov's law of the iterated logarithm, {\em Duke Math. J.}, {\bf 57} (1988), no.1, 37--68.

\bibitem{BM} R.~Ba\~{n}uelos, C.N.~Moore,  {\em Probabilistic behavior of Harmonic functions},  Birkh\"{a}user, 1999.

\bibitem{BL}  A.~Borichev,  Yu.~ Lyubarskii, 
Uniqueness theorems for Korenblum type spaces,
{\em J. Anal. Math.}, {\bf 103} (2007), 307--329. 

\bibitem{BLMT}  A.~Borichev,  Yu.~ Lyubarskii, E.~Malinnikova, P.~Thomas,
Radial growth of functions in the Korenblum space,
{\em Algebra i Analiz}, {\bf 21} (2009), no. 6. (see also arXiv:0903.5125)

\bibitem{BG} D.L.~Burkholder, R.F.~Gundy, Extrapolation and interpolation of quasilinear operators on martingales, {\em Acta Math.}, {\bf 124} (1970), 249--304.
 
\bibitem{CWW} C.Y.~Chang, J.M.~Wilson, and T.H.~Wolf,
 Some weighted norm inequalities concerning the Sr\"{o}dinger operator, {\em  Comment. Math. Helv.}, {\bf 60} (1985),   217--246.

\bibitem{EM}  K.S.~Eikrem, E.~Malinnikova, Radial behavior of harmonic functions  in the unit ball,
{Preprint, 2010}


\bibitem{HKZ} H.~Hedenmalm, B.~Korenblum, K.~Zhu, {\em Theory of Bergman spaces}, 
Graduate Texts in Mathematics, 199. Springer-Verlag, New York, 2000.


\bibitem{KK} J.-P.~Kahane, Y.~Katznelson, Sur le comportement radial des fonctions analytiques. (French) {\em C. R. Acad. Sci. Paris} Ser. A-B {\bf 272} (1971) A718--A719.


\bibitem{KWW} J.-P.~Kahane, M.~Weiss, and G.~Weiss,
On lacunary power series, {\em
Arkiv for Matematik,} {\bf 5} (1963), no. 1-2, 1--26.


\bibitem{K} B.~Korenblum, An extension of the Nevanlinna theory, {\em
Acta Math.} {\bf 135} (1975), no. 3-4, 187--219.

\bibitem{M} N.~Makarov, 
On the distortion of boundary sets under conformal mappings.
{\em Proc. London Math. Soc.}, {\bf 3} 51 (1985), no. 2, 369--384.
 
 
 
 
 \bibitem{MT} X.~Massaneda, P.~Thomas,   Phragm{\'e}n-Lindel{\"o}f-type problems for $A^{-\alpha}$. {\em Bergman spaces and related topics in complex analysis}, 153--163, Contemp. Math., {\bf 404}, Amer. Math. Soc., Providence, RI, 2006.
 
 
 
 
 \bibitem{P}  I.~I.~Privalov,    {\em Boundary properties of analytic functions} (Russian),  Gosudarstv. Izdat. Tehn.-Teor. Lit., Moscow-Leningrad, 1950.  
 
 
\bibitem{S} K.~Seip, {\em Interpolation and sampling in spaces of analytic functions} University Lecture Series, 
{\bf 33}. American Mathematical Society, Providence, RI, 2004.
 
\bibitem{St} W.F.~Stout,  A martingale analogue of Kolmogorov's law of the iterated logarithm, {\em  Z. Wahrscheinlichkeitstheorie und Verw. Gebiete}, {\bf 15}  (1970), 279--290.



\bibitem{Stbook} W.F.~Stout,  {\em  Almost sure convergence}. Probability and Mathematical Statistics, Vol. 24. Academic Press,   New York-London, 1974.  






\bibitem{W} M.~Weiss, The law of the iterated logarithm for lacunary trigonometric series,
{\em Trans. AMS}, {\bf 91} (1959), no. 3, 444--469.

\end{thebibliography}
\end{document}